\newcommand{\fer}[1]{(\ref{#1})}
\newcommand{\commentout}[1]{}
\newcommand{\R}{\mathbb{R}}
\newcommand{\Z}{\mathbb{Z}}
\newcommand {\al} {\alpha}
\newcommand {\bt} {\beta}
\newcommand {\e}  {\varepsilon}
\newcommand {\da} {\delta}
\newcommand {\vp} {\varphi}
\newcommand {\Chi} {{\bf \raise 2pt \hbox{$\chi$}} }
\newcommand {\caa} { {\mathcal A} }
\newcommand {\cbb} { {\mathcal B} }
\newcommand {\cac} { {\mathcal C} }
\newcommand {\cad} { {\mathcal D} }
\newcommand {\f}   {\frac}
\newcommand {\p}   {\partial}
\newcommand{\beq}{\begin{equation}}
\newcommand{\eeq}{\end{equation}}
\newcommand{\bea} {\begin{array}{rl}}
\newcommand{\eea} {\end{array}}
\newcommand{\bc} {\begin{cases}}
\newcommand{\ec} {\end{cases}}
\newcommand{\bepa}{\left\{ \begin{array}{l}}
\newcommand{\eepa} {\end{array}\right.}
\newtheorem{theorem}{Theorem}[section]
\newtheorem{lemma}[theorem]{Lemma}
\newtheorem{proposition}[theorem]{Proposition}
\title{\Large \bf A singular Hamilton-Jacobi equation modeling the tail problem}
\author{Sepideh Mirrahimi  \thanks{
UPMC, CNRS UMR 7598, Laboratoire Jacques-Louis Lions, F-75005, Paris.
Email: mirrahimi@ann.jussieu.fr}
\and Guy Barles  \thanks{
Laboratoire de Math\'ematiques et Physique Th\'eorique, CNRS UMR 6083, F\'ed\'eration Denis Poisson,
Universit\'e Fran\c{c}ois Rabelais, Parc de Grandmont,
37200 Tours, France.
Email: barles@lmpt.univ-tours.fr}
\and  Beno\^ \i t Perthame \footnotemark[1] \thanks{
INRIA EPI BANG and Institut Universitaire de France. Email: benoit.perthame@upmc.fr}
\and Panagiotis E. Souganidis  \thanks{The University of Chicago,
Department of Mathematics, 5734 S. University Avenue,
Chicago, IL 60637, USA. Email: souganidis@math.uchicago.edu}
\thanks{Partially supported by the National Science Foundation
}}
\date{\today}
\begin{document}
\maketitle
\pagestyle{plain}
\pagenumbering{arabic}

\begin{abstract}
We study the long-time/long-range behavior of reaction diffusion equations with negative square root-type reaction terms.
In particular we investigate the exponential behavior of the solutions after space and time are scaled in a hyperbolic way by a small parameter. This leads to a new type of quasi-variational inequality for a Hamilton-Jacobi equation. The novelty is that the obstacle, which
defines the open set where the solutions of the reaction diffusion equation do not vanish in the limit, depends on the solution itself. Counter-examples show a nontrivial lack of uniqueness for the variational  inequality depending on the  conditions imposed on the free boundary of  this open set. Both Dirichlet and state constraints boundary conditions play a role. When the competition term does not change sign,  we can identify the limit while, in general, we only obtain lower and upper bounds.

Although models of this type are rather old and extinction phenomena are as important as blow-up, our motivation comes from the so-called  ``tail problem'' in population biology. One way to avoid meaningless exponential tails is to impose a singular mortality rate below a given survival threshold. Our study shows that the precise form of this singular mortality term is asymptotically irrelevant and that, in the survival zone, the population profile is impacted by the survival threshold (except in the very particular case when the competition term is nonpositive).

\end{abstract}

\noindent {\bf Key-words}: Reaction-diffusion equations, Asymptotic analysis, Hamilton-Jacobi equation, Survival threshold, Population biology, quasi-variational inequality, Free boundary.
\\
{\bf AMS Class. No}: 35B25, 35K57, 49L25, 92D15

\section{Introduction}
\label{sec:int}

We study the asymptotic behavior, as $\e\to0$, of the solutions to reaction-diffusion equations (with singular reaction term) of the form
\begin{equation}
\label{eq:sroot}
\begin{cases}
n_{\e,t}-\e \Delta n_\e = \frac{1}{\e} n_\e R-\frac{1}{\e}{(\beta_\e n_\e)}^{1/2} \quad \hbox{in  }\R^d \times (0,+\infty),
\\
n_\e = e^{u_\e^0/\e} \quad \text {on  } \quad \R^d \times \{0\},
\end{cases}
\end{equation}
where $u_\e^0: \R^d\to \R$ is a given function, $R:\R^d\to \R$  represents a linear  logistic growth/death rate and the survival threshold parameter $\beta$, which models a singular death term, is given, for some $u_m<0$, by
\begin{equation}
\label{eq:threshold}
\beta_\e= e^{u_m/\e}.
\end{equation}

The positive parameter $\e$ is introduced by a hyperbolic scaling $(x,t) \mapsto (x/\e,t/\e)$ with the aim to describe the long time and long range behavior of the unscaled problem (corresponding to $\e =1$). The limiting behavior of scaled reaction-diffusion equations with KPP-type reaction has been studied extensively in, among other places, the theory of front propagation (\cite{GB.LE.PS:90,PS:98,LE.PS:89}) using the so called WKB-(exponential) change of the unknown.

The novelty of the problem we are considering here is the presence of the negative square root term. To the best of our knowledge, the first study of such nonlinearity  goes back to \cite{LE.BK:79} where it is proved that local extinction occurs, i.e.,  the solution can vanish in a domain and stay positive in another region. For this reason $\beta$ is thought to represent a survival threshold. That a solution of a parabolic problem can vanish locally is a surprising effect and as singular as the blow-up phenomena for supercritical reactions terms (\cite{PQ.PS:07}). In population biology such behavior prevents the so-called ``tail problem''  where very small (and thus meaningless) populations can generate artifacts (\cite{MG.BP:09}).
Although the mathematical analysis of the limit of \fer{eq:sroot} turns out to be a full subject in itself, our primary motivation comes from qualitative questions in population dynamics.

Indeed \eqref{eq:sroot} is the simplest model for studying the effect of ``cutting the tail'' but many other problems are relevant in ecology. Along the same lines, in the context of front propagation, one may consider the modified Fisher--KPP equation
$$
n_{\e,t}-\e \Delta n_\e = \frac{1}{\e} n_\e (1- n_\e) -\frac{1}{\e}{(\beta_\e n_\e)}^{1/2}\quad \hbox{in  }\R^d \times (0,+\infty),
$$
and ask the question whether the square root term changes fundamentally the study in \cite{LE.PS:89} and \cite{MG.BP:09} of the propagation of the invading/combustion fronts. In the context of speciation, an  elementary model in adaptive evolution is the non-local reaction-diffusion equation
$$
n_{\e,t}-\e \Delta n_\e = \frac{1}{\e} n_\e R(x,I_\e)-\frac{1}{\e}{(\beta_\e n_\e)}^{1/2}\quad \hbox{in  }\R^d \times (0,+\infty) \quad \text{ with } \quad I_\e (t) =\int \psi(x)n_\e(x,t)dx,
$$
where $n_\e$ is the population density of individuals with phenotypical trait $x$, $R$ represents the net growth rate, $\psi$ is the consumption rate of individuals and $I(t)$ is the total consumption of the resource at time $t$. The survival threshold was introduced in \cite{MG.BP:09}. Finally $\e$ may represent large time and small mutations as studied in \cite{GB.SM.BP:09,GB.BP:07,GB.BP:08}. It is known that under some assumptions the density concentrates as an evolving Dirac mass for the fittest trait. In biological terms this means that one or several dominant traits survive while others become extinct. Phenomena such as the discontinuous jumps of the fittest trait, non smooth branching and fast dynamics compared to stochastic simulations,  motivated \cite{MG.BP:09} to improve the model by including a survival threshold. Numerical results confirm that this modification  gives dynamics comparable to stochastic models. It is interesting to investigate rigorously whether the dynamics of the Dirac concentration points are really changed by the survival threshold and to explain why its specific form ($n_\e^{1/2}$ versus $n_\e^\gamma $ with $0< \gamma <1$) seems irrelevant.

A way to approach these questions for  \eqref{eq:sroot} is through the asymptotic analysis of $n_\e$. Since, as in the classical case, i.e., the Fisher-KPP equation without the square root term (see \cite{LE.PS:89}),  $n_\e$ decays exponentially, the limit is better described using the Hopf-Cole transformation
\beq\label{defue}
u_\e= \e \ln n_\e,
\eeq
which, for $u_\e^0 = \e \ln n_\e^0$, leads to the ``viscous'' Hamilton-Jacobi initial value problem
\beq
\label{eq:ue}\begin{cases}
\displaystyle u_{\e,t}-\e \Delta u_\e-|D u_\e|^2=R-\exp \left(\frac{u_m-u_\e}{2 \e}\right)
\quad \text{ in  } \quad \R^d \times (0,+\infty),
\\[2mm]
u_\e=u_\e^0 \quad \text {in  } \R^d \times \{0\}.
\end{cases}
\eeq

Throughout the paper we assume that there exist $C>0$ and $u^0 \in \mathrm{C}^{0,1}(\R^d)$ such that
\begin{equation}\label{assum:R}
\| R \|_{\mathrm{C}^{0,1}}\leq C \quad \text{ and } \quad \|u^0\|_{\mathrm{C}^{0,1}} \leq C,
\end{equation}
\begin{equation}\label{test}
u^0_\e \in \mathrm{C}(\R^d), \quad u^0_\e \leq C \quad \text{ and } \quad  u^0_\e \underset{\e\to 0}{\longrightarrow}  u^0 \quad \text{ in } \quad \mathrm{C}(\R^d).
\end{equation}

In the limit $\e\to 0$, it is easy to see, at least formally, that any local uniform limit of the family $(u_\e)_{\e>0}$ will  satisfy, in the sense of the Crandall-Lions viscosity solutions (\cite{C.L:83}), the Hamilton-Jacobi free boundary problem
\beq \label{eq:hjs}
\begin{cases}
u_t =|D u|^2 + R \quad  \text{in } \quad \Omega\subset \R^d \times (0,\infty),\\
u=-\infty \quad \text{in } \quad \overline{\Omega}^c\cap (\R^d \times (0,\infty)),\\
u\geq  u_m  \quad \text{in } \quad \overline{\Omega},  \\
u=u^0 \quad \text{in } \quad \overline{\Omega}\cap (\R^d \times \{0\}),
\end{cases}
\eeq
with the space-time open set $\Omega$ defined by
$$
\Omega = {\mathcal Int} \;  \big\{ (x,t)\in \R^d \times (0,\infty) : \ \lim_{\e \to 0} u_\e (x,t) > - \infty \big\}.
$$

Notice that \fer{eq:hjs} is an obstacle problem with an obstacle depending on the solution itself. As a matter of fact the open set $\Omega$ plays an important role and, hence, the problem may be better stated in terms of the pair $(u,\Omega)$.
The difficulty is that \fer{eq:hjs} has several viscosity solutions (see Appendix \ref{ap.nuniq} for examples) depending on the sense the boundary conditions are achieved and the sign of $R$.

Next we discuss the two boundary conditions arising in \fer{eq:hjs}. The first is the Dirichlet boundary condition in the third relation in \eqref{eq:hjs}. Its precise form is
\beq\label{limbor}
\lim_{(x,t)\to (x_0,t_0) \in \partial \Omega} u(x,t) = u_m.
\eeq
The second is the state constraint boundary condition (see \cite{HS:86}), which is natural in view of the second equality in \eqref{eq:hjs}. It states that
\beq\label{sc}
u \; \text {is a  supersolution in} \; \overline{\Omega} \; \text { and a subsolution in  $\Omega$ }.
\eeq

The basic questions we are considering in this paper are:
\\
$\bullet$ What boundary condition should be satisfied by the limits of the family $(u_\e)_{\e>0}$ on $\partial \Omega$? Dirichlet or state constraint?
The latter appears to  play a fundamental role.
To the best of our knowledge, there are no results available for state constraint problems with time varying and non smooth domains. Most of the technicalities in the paper stem from this difficulty.
\\
$\bullet$ Does the limit $\e\to0$ select a particular solution to \fer{eq:hjs}, i.e., is there a natural selection? Is the limit of the family $(u_\e)_{\e>0}$ the maximal subsolution or minimal solution to \fer{eq:hjs}?
\\
$\bullet$ Do the limits of the family $(u_\e)_{\e>0}$ depend on the specific form of the survival threshold, i.e.,  can we replace $(\beta_\e n_\e)^{1/2}$ by $(\beta_\e n_\e)^{\gamma}$ with $\gamma \in (0,1)$ without affecting the outcome?
\\

An important ingredient of our analysis is the asymptotics, as $\e\to0$, of the solution $u_\e^1$ of
\begin{equation}
\label{hj1}
\begin{cases}
\displaystyle u^1_{\e,t}=\e \Delta u^1_\e + |D u^1_\e|^2 + R
\quad \text{ in  } \quad \R^d \times (0,+\infty),
\\[2mm]
u^1_\e=u_\e^0 \quad \text {in  } \R^d \times \{0\},
\end{cases}
\end{equation}
which is obtained, after the Hopf-Cole transformation
\beq
\label{defue1}
u_\e^1=\e \ln n_\e^1,
\eeq
from the simplified reaction diffusion equation
 \begin{equation}
\label{eq:simple}
\begin{cases}
n_{\e^,t}^1-\e \Delta n_\e^1 = \f{n_\e^1}{\e}R \quad \text {in  } \quad \R^d \times (0,+\infty), \\[4mm]
n_\e^1 = \exp(\e^{-1}u_\e^0) \quad \text{in  } \R^d\times\{0\}.
\end{cases}
\end{equation}

In view of \fer{assum:R} and \fer{test}, it follows from \cite{LE.PS:89}
that, as $\e \to0$, the sequence $(u_\e^1)_{\e}$ 
converge locally uniformly to $u^1\in \mathrm{C}(\R^d\times(0,\infty))$, which is the unique viscosity solution of the eikonal -type equation
\beq\begin{cases}\label{eq:u1}
u^1_t=|D u^1|^2+R\quad \text {in  } \ \R^d \times (0,+\infty),\\[4mm]
u^1=u^0 \quad \hbox{in  } \ \R^d\times\{0\}.
\end{cases}
\eeq

The maximum principle yields  $n_\e \leq n_\e^1$, which in turn implies that $u_\e \leq u_\e^1$ and, in the limit (this is made precise later), $u\leq u^1$. It also follows from \fer{eq:ue}, at least formally, that, as $\e\to0$,
$$
u_\e \to -\infty  \qquad \text{in }\; (\R^d\times (0,\infty))\backslash \overline{\Omega^1},
$$
where
\beq\label{omeg}
\Omega^1=\{(x,t)\,|\; u^1(x,t)>u_m\}.
\eeq

It turns out that the case of nonpositive rate $R$ is particularly illuminating and the above  questions can be answered completely and positively using $u^1$ (see Section \ref{sec:Rneg}).
The problem is, however, considerably more complicated  when $R$ takes positive values. In this case we introduce an iterative procedure that builds sequences of sub and supersolutions (Section \ref{Rgeneral}). This construction gives the complete limit of $u_\e$  when $R$ is constant (Section \ref{sec:Rcons}). The limit is not the maximal subsolution of~\fer{eq:hjs} and the Dirichlet condition
is not enough to select it. In Section \ref{sec:rpositive}, we consider strictly positive spatially dependent $R$ and provide a complete answer in terms of the iterative procedure. The relative roles of the Dirichlet and state constraint boundary conditions
appear clearly in this case. In Section \ref{sec:conclusion} we summarize our results. In the three part Appendix we present some examples of nonuniqueness as well as the proofs of few technical facts
used earlier.

We conclude the introduction with the definition and the notation of the half-relaxed limits that we will be using throughout the paper. To this end, if $(w_\e)_{\e>0}$ is a family of bounded functions, the upper and lower limits, which are denoted by $\bar w$ and $\underline w$ respectively, are given by
\beq\label{limits}
\overline w(x)=\underset{\e\rightarrow 0, y \rightarrow x}\limsup w_\e(y) \quad \text{ and } \quad
\underline w(x)=\underset{\e\rightarrow 0, y \rightarrow x}\liminf w_\e(y).
\eeq

\medskip
\noindent
\textbf{Acknowledgements. }
The authors wish to thank the anonymous referee for his very careful reading of the first version of this article and his numerous suggestions to improve its readibility.

\section{Nonpositive growth rate}
\label{sec:Rneg}

Here we assume
\beq\label{nonpositive}
R \leq 0 \quad \text{ in } \quad \R^d,
\eeq
and show that the behavior of the family $(u_\e)_{\e}$, in the limit $\e\to0$, can be described completely in terms of solution $u^1$ of \fer{eq:u1}, which carries all the necessary information. More precisely, we can state the

\begin{theorem} \label{thm:principal}
Assume (\ref{assum:R}), (\ref{test}) and \fer{nonpositive}. As $\e \to 0$, the family $(u_\e)_{\e>0}$ converges, locally uniformly in $\Omega^1$ and in $\left(\R^d\times (0,\infty)\right)\backslash\overline{\Omega^1}$, to
\begin{equation}\label{Rneglim}
u(x,t)=
\begin{cases}
u^1(x,t) \quad  \text{for } \quad  (x,t)\in \Omega^1,\\
\; -\infty \qquad  \text{for } \quad (x,t)\in \left(\R^d\times (0,\infty)\right)\backslash\overline{\Omega^1},\\
\end{cases}
\end{equation}
with $u^1$ and $\Omega^1$ defined by \fer{eq:u1} and  \fer{omeg} respectively. In particular, $u(x,t)\rightarrow u_m$ as $(x,t)\rightarrow \p \Omega^1$.
\end{theorem}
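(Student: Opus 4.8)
The plan is to prove the result by establishing matching upper and lower bounds on the half-relaxed limits $\overline u$ and $\underline u$ of the family $(u_\e)_{\e>0}$, while paying careful attention to the boundary $\partial \Omega^1$. First I would record the easy direction: by the maximum principle applied to \fer{eq:sroot} and \fer{eq:simple}, we have $n_\e \le n_\e^1$, hence $u_\e \le u_\e^1$, and passing to the half-relaxed limit gives $\overline u \le u^1$ everywhere. In $\Omega^1$ this already gives one inequality; since $R\le 0$, the term $-\exp((u_m-u_\e)/2\e)$ in \fer{eq:ue} is the only source term that can push $u_\e$ down, so on compact subsets of $\Omega^1$, where $u^1 > u_m$ strictly, I expect $u_\e$ to stay uniformly bounded below by something strictly above $u_m$, which kills the singular term exponentially fast. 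Making this rigorous is the heart of the lower bound in $\Omega^1$.

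Concretely, for the lower bound in $\Omega^1$ I would fix a compact set $K\subset\subset\Omega^1$ and a small $\eta>0$ such that $u^1\ge u_m+2\eta$ on a neighborhood of $K$. The natural approach is a barrier/viscosity-solution argument: I would construct a subsolution $\underline w_\e$ of \fer{eq:ue} on a suitable space-time cylinder around $K$ by perturbing $u_\e^1$ (or $u^1$ regularized) downward by a small margin, using that $u_\e^1 \to u^1$ locally uniformly (this is the cited result of \cite{LE.PS:89}, giving convergence to the unique solution of \fer{eq:u1}). Since on this cylinder $u_\e^1$ stays well above $u_m$, the perturbed barrier also stays above $u_m+\eta$, so the singular term $\exp((u_m-\underline w_\e)/2\e)\le \exp(-\eta/2\e)\to 0$ faster than any polynomial; absorbing this into the $R$-term via the perturbation and invoking $R\le 0$ shows $\underline w_\e$ is a genuine subsolution of \fer{eq:ue} for $\e$ small. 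A comparison principle for \fer{eq:ue} (uniformly parabolic, bounded coefficients) then yields $u_\e \ge \underline w_\e$ on $K$, and letting $\e\to 0$ gives $\underline u \ge u^1$ on $K$, hence on all of $\Omega^1$. Combined with $\overline u\le u^1$ this gives locally uniform convergence $u_\e\to u^1$ in $\Omega^1$.

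For the region $(\R^d\times(0,\infty))\setminus\overline{\Omega^1}$, where $u^1<u_m$, I would argue that $u_\e$ must collapse to $-\infty$: from $u_\e\le u_\e^1$ and local uniform convergence of $u_\e^1$ to $u^1$, on any compact subset $u_\e\le u_m-\da$ for some $\da>0$ and $\e$ small. Then in \fer{eq:ue} the singular term satisfies $\exp((u_m-u_\e)/2\e)\ge \exp(\da/2\e)$, a large positive sink, which via a comparison argument with an explicitly decreasing supersolution (essentially integrating $\dot v = R - e^{\da/2\e}$ in time, using $R\le 0$) forces $u_\e \to -\infty$ locally uniformly. Finally, the boundary behavior $u(x,t)\to u_m$ as $(x,t)\to\partial\Omega^1$ follows from continuity of $u^1$ up to $\partial\Omega^1$ together with the definition \fer{omeg}: on $\partial\Omega^1$ one has $u^1=u_m$, and the convergence $u_\e\to u^1$ inside $\Omega^1$ propagates this as a limit along $\partial\Omega^1$.

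The main obstacle I anticipate is the lower-bound barrier construction near $\partial\Omega^1$: away from the boundary the singular term is negligible and the argument is routine, but uniformity of the comparison as the compact set approaches $\partial\Omega^1$ requires care, since the margin $\eta$ degenerates there and one must track how the exponential smallness of $\exp((u_m-\underline w_\e)/2\e)$ competes with the shrinking distance to $\{u^1=u_m\}$. A secondary technical point is justifying the comparison principle for \fer{eq:ue} despite the unbounded (but favorably signed, when $u_\e$ is bounded below) singular nonlinearity; this should follow from its monotonicity in $u_\e$ but needs to be stated cleanly, perhaps by first getting crude two-sided a priori bounds on $u_\e$ on compact sets.
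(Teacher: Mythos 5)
Your upper bound ($\overline u\le u^1$ via $n_\e\le n_\e^1$) and your treatment of the region $\{u^1<u_m\}$ are in the spirit of the paper (the paper uses a test-function argument there rather than an explicit supersolution, and your explicit barrier would need a spatial cutoff to handle the lateral boundary, but that is patchable). The genuine gap is in the central step, the lower bound $\underline u\ge u^1$ in $\Omega^1$. Your plan is to perturb $u^1_\e$ downward into a subsolution of \fer{eq:ue} on a cylinder around a compact $K\subset\subset\Omega^1$ and invoke comparison, but this is circular: parabolic comparison on a bounded cylinder requires $u_\e\ge \underline w_\e$ on the \emph{lateral} boundary, and no a priori lower bound on $u_\e$ is available there --- indeed, because of the absorption term, $n_\e$ can vanish (local extinction is the whole point of the model), so $u_\e=\e\ln n_\e$ may equal $-\infty$ and \fer{eq:ue} does not even hold as an equation for a real-valued function everywhere. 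You cannot escape by making the barrier global in space either: outside $\Omega^1$ any admissible barrier must drop below $u_m$, and there the term $\exp((u_m-\underline w_\e)/2\e)$ is exponentially \emph{large}, so no fixed smooth function can satisfy the subsolution inequality. The same circularity infects your final remark about comparison for \fer{eq:ue}: the zeroth-order term $+e^{(u_m-u)/2\e}$ is \emph{decreasing} in $u$, so the operator is not proper, and the one-sided Lipschitz constant one would need blows up precisely when $u$ is not bounded below --- which is the bound you were hoping to extract from the comparison. You flag the difficulty near $\partial\Omega^1$, but the obstruction is already present deep inside $\Omega^1$.

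The paper's proof resolves exactly this point with a device your proposal is missing: for $0<A<-u_m$ it sets $v_\e^A=\e\ln\bigl(n_\e+e^{-A/\e}\bigr)$ (and similarly $v_\e^{A,1}$ from $n^1_\e$). The added floor forces $v_\e^A\ge -A>u_m$ \emph{globally}, so the singular term, rewritten as $(\beta_\e n_\e)^{1/2}/(n_\e+e^{-A/\e})\le \tfrac12 e^{(u_m+A)/2\e}$, is uniformly $o(1)$ with no localization needed, and $v_\e^A$ is always finite. Using $R\le 0$, half-relaxed limits and comparison for the obstacle problem \fer{variation}, one identifies $\lim v_\e^A=\max(u^1,-A)$; then at any $(x_0,t_0)\in\Omega^1$ one chooses $u^1(x_0,t_0)>-A>u_m$ and recovers $u_\e=v_\e^A+\e\ln\bigl(1-e^{(-A-v_\e^A)/\e}\bigr)\to u^1$ locally uniformly. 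Without this (or an equivalent mechanism that supplies a global lower barrier above the threshold), the key lower bound in $\Omega^1$ in your proposal is not established.
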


Before we begin with the proof, we present and discuss below several remarks and observations which are important to explain the meaning of the results.

Firstly, by ``uniform convergence'' to $-\infty$, we mean $\limsup_{\e\to 0, y\to x, s\to t }u_\e(y,s)=-\infty$.
Secondly, the $u$ associated with the open set $\Omega^1$ is the maximal solution to \fer{eq:hjs}. Indeed any other solution $\widetilde u$, with the corresponding open set $\widetilde  \Omega$, satisfies $\widetilde u\leq u^1$ and thus $\widetilde \Omega\subset \Omega^1$ and $\widetilde u\leq u$. It also satisfies the Dirichlet and state constraint boundary conditions. To verify the latter we notice, using the standard optimal control formula (\cite{Lionsbook:82, WF.HS:93,MB.ICD:96}), that
$$
u^1(x,t)=\sup_{\underset{x(t)=x}{(x(s),s)\in \R^d\times[0,\infty)}}
\left\{\int_0^t \left( -\f{|\dot x(s)|^2}{4}+R(x(s))\right) ds+u_0(x(0)): x\in C^1([0,t];\R^d) \right\}.
$$

If $\widetilde x(\cdot)$ is an optimal trajectory, the dynamic programming principle implies that, for any $0<\tau<t$,
$$
u^1(x,t)= \int_\tau^t \left(-\f{|\dot {\widetilde x}(s)|^2}{4}+R(\widetilde x(s))\right) ds+u^1(\widetilde x(\tau),\tau).
$$

Since $R$ is nonpositive, $u^1$ is decreasing along the optimal trajectory. It follows that, if $ u^1(x,t)>u_m$,  then, for all $0\leq \tau <t$, $u^1( \widetilde x(\tau), \tau)>u_m$.

Hence, for all $(x,t)\in \Omega^1$,
$$
u(x,t)=\sup_{\underset{x(t)=x}{(x(s),s)\in  \Omega^1}}
\left\{\int_0^t
\left(-\f{|\dot x(s)|^2}{4}+R(x(s)) \right)ds+u_0(x(0)):x\in C^1([0,t];\R^d) \right\},
$$
and, therefore, $u$ verifies the state constraint condition.

Finally, the limit $u$ does not depend on the details of the singular death term. In particular it is the same
if we replace in \fer{eq:sroot} ${n_\e \exp(\e^{-1} u_m)}^{1/2}$ by $n_\e^\gamma \exp(\e^{-1}\gamma u_m)$ with  $0<\gamma<1$. Hence, the value $\gamma=1/2$ is irrelevant.
\\

We continue with the
\begin{proof}[Proof of Theorem \ref{thm:principal}]
As already discussed in the introduction, we know that $u_\e \leq u_\e^1$ but we cannot obtain directly the other inequality in the limit $\e\to0$.  It is therefore necessary to introduce a pair of  auxiliary functions $v_\e^A$ and $v_\e^{A,1}$ which converge, as $\e\to0$, in $\mathrm{C}(\R^d\times(0,\infty))$ to $\max(u^1,-A)$.
Using this information for appropriate values of the parameter $A$, we then prove that, as $\e\to0$,
$u_\e\to u^1$ locally uniformly in the open set
\beq \label{defA}
\caa=\{(x,t):\;u^1(x,t)>u_m\},
\eeq
and
$u_\e\rightarrow -\infty$ locally uniformly in the open set
\beq \label{defB}
\cbb=\{(x,t):\;u^1(x,t)<u_m\}.
\eeq

To this end, for any $A$ such that
\beq\label{A}
0< A<-u_m,
\eeq
we consider the functions $v_\e^A$ and $v_\e^{A,1}$ given by
\beq
\label{defv}
n_\e+\exp(\frac{-A}{\e})=\exp(\frac{v_\e^A}{\e}) \quad \text{ and } \quad n_\e^1+\exp(\frac{-A}{\e})=\exp(\frac{v_\e^{A,1}}{\e}).
\eeq

We have:
\begin{proposition} \label{thm:prop}
Assume (\ref{assum:R}), (\ref{test}), \fer{nonpositive} and \fer{A}.
As $\e\to0$, the families $(v_\e^{A,1})_{\e>0}$ and $(v_\e^A)_{\e>0}$ converge in  $\mathrm{C}(\R^d\times[0,\infty))$ to the unique solution $v^{A,1}=\max(u^1,-A)$ of
\begin{equation}
\label{variation}
\begin{cases}
\min\big(v^{A,1}+A, v_t^{A,1}-|D v^{A,1}|^2-R\big)=0 \ \text{ in } \ \R^d\times(0,\infty),\\[4mm]
v^{A,1}=\max(u^0,-A) \ \text{ on } \ \R^d\times \{0\}.
\end{cases}
\end{equation}
\end{proposition}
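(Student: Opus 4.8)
The plan is to prove Proposition~\ref{thm:prop} in three stages: first establish uniform bounds and equicontinuity estimates on the families $(v_\e^A)_\e$ and $(v_\e^{A,1})_\e$ so that the half-relaxed limits are finite and the convergence is (locally) uniform; second, pass to the limit in the equations solved by $v_\e^A$ and $v_\e^{A,1}$ to show that the upper and lower half-relaxed limits are respectively a sub- and a supersolution of the variational inequality~\fer{variation}; third, invoke a comparison principle for~\fer{variation} to conclude that the limit is unique and equals $\max(u^1,-A)$.

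First I would derive the equation satisfied by $v_\e^A$. Writing $n_\e = \exp(v_\e^A/\e) - \exp(-A/\e)$ and substituting into~\fer{eq:sroot}, a direct computation gives, in $\R^d\times(0,\infty)$,
\beq\label{eq:veA}
v_{\e,t}^A - \e \Delta v_\e^A - |Dv_\e^A|^2 = \Big(1 - e^{(-A-v_\e^A)/\e}\Big)\Big(R - e^{(u_m-v_\e^A)/(2\e)}\big(1 - e^{(-A-v_\e^A)/\e}\big)^{-1/2}\Big),
\eeq
and an analogous but simpler identity for $v_\e^{A,1}$ with the square-root term dropped. Since $A < -u_m$, on the set where $v_\e^A$ is close to $-A$ the factor $(1 - e^{(-A-v_\e^A)/\e})$ is small and the singular term $e^{(u_m-v_\e^A)/(2\e)}$ is exponentially negligible compared with it (because $u_m < -A$), so the right-hand side is controlled; this is exactly the algebraic reason the threshold $A < -u_m$ is imposed. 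The bounds are then obtained by comparison: from above, $v_\e^A \le v_\e^{A,1} \le u_\e^1 + C\e \le C$ using $n_\e \le n_\e^1$ and~\fer{test}; from below, $v_\e^A \ge -A$ trivially, and on $\caa$ one uses a subsolution built from $u^1$. The Lipschitz-in-$x$ and local Lipschitz-in-$t$ estimates follow from the classical Bernstein-type gradient bounds for~\fer{hj1} together with the structure of~\eqref{eq:veA}, exactly as in \cite{LE.PS:89}; these give the equicontinuity needed for uniform convergence up to $t=0$, where~\fer{test} fixes the initial trace as $\max(u^0,-A)$.

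The second stage is the half-relaxed limits argument of Barles--Perthame type: let $\overline v = \limsup{}^* v_\e^A$ and $\underline v = \liminf{}_* v_\e^A$. One always has $\overline v, \underline v \ge -A$. At a point where $\overline v > -A$ strictly, the factor $(1 - e^{(-A-v_\e^A)/\e}) \to 1$ and the singular term vanishes in the limit, so the standard stability of viscosity solutions applied to~\eqref{eq:veA} shows $\overline v$ is a subsolution of $v_t - |Dv|^2 - R = 0$ there; combined with $\overline v \ge -A$ this means $\overline v$ is a viscosity subsolution of $\min(v+A,\, v_t - |Dv|^2 - R) = 0$. For the supersolution property of $\underline v$ the reasoning is symmetric, the point being that at a test point where $\underline v > -A$ the perturbative terms again disappear. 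Since the same analysis applies verbatim to $v_\e^{A,1}$ (with no square-root term to worry about), one gets that the half-relaxed limits of both families are trapped between a subsolution and a supersolution of~\fer{variation} with the same initial data.

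The last step, and the one I expect to be the main obstacle, is the comparison principle for the obstacle problem~\fer{variation}: any subsolution is below any supersolution, with the usual matching of initial conditions. The Hamiltonian $H(p) = -|p|^2 - R(x)$ is not coercive in a way that helps and is merely Lipschitz in $x$ through $R$, but this is a standard first-order Hamilton--Jacobi obstacle problem with a Lipschitz obstacle $-A$, so the doubling-of-variables argument goes through once one checks that the obstacle term is compatible (the min-structure is preserved under sup/inf-convolution regularization). One must also handle the behavior as $t\downarrow 0$ and as $|x|\to\infty$; the latter is dealt with by the at-most-linear growth of $u^1$ and a standard localization using a penalization term like $\chi(x/R_0)$, and the former by the uniform continuity up to $t=0$. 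Granting comparison, $\overline v \le v^{A,1} \le \underline v$ forces equality and identifies the common limit as the unique solution $v^{A,1}$, which by the explicit optimal-control/obstacle representation (or by directly checking that $\max(u^1,-A)$ solves~\fer{variation}, using that $u^1$ solves~\fer{eq:u1}) equals $\max(u^1,-A)$; convergence is then automatically locally uniform by the standard consequence of equal half-relaxed limits.
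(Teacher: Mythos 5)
Your overall architecture (uniform bounds, half-relaxed limits of $v_\e^A$, sub/supersolution of the obstacle problem, comparison, identification of the unique solution as $\max(u^1,-A)$ through $v_\e^{A,1}$) is the same as the paper's, but there is a genuine gap in your supersolution step, and it sits exactly where the hypothesis $R\le 0$ must enter --- an assumption your argument never actually uses. For $\underline v$ to be a viscosity supersolution of $\min(v+A,\,v_t-|Dv|^2-R)=0$, and for the comparison principle you invoke to apply, the inequality $\phi_t-|D\phi|^2-R\ge 0$ must hold at \emph{every} test point, including those where $\underline v=-A$: since $\min(a,b)\ge 0$ forces both $a\ge 0$ and $b\ge 0$, it is not enough to argue ``symmetrically'' to the subsolution case only where $\underline v>-A$. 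At a point where $\underline v=-A$, the factor $1-e^{(-A-v_\e^A)/\e}$ in your displayed equation for $v_\e^A$ need not tend to $1$; along the approximating minima it can converge to any $\theta\in[0,1)$, and the limit inequality you obtain is $\phi_t-|D\phi|^2\ge (1-\theta)R$. Only because $R\le 0$ is $(1-\theta)R\ge R$. This is precisely how the paper proceeds: it bounds $0\le n_\e/(n_\e+e^{-A/\e})\le 1$ and the singular term by $\tfrac12 e^{(u_m+A)/(2\e)}\to 0$ uniformly, so that $v_{\e,t}^A-\e\Delta v_\e^A-|Dv_\e^A|^2\ge R-O(\e)$ holds \emph{everywhere}, and then passes to relaxed limits. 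Without this step the supersolution property fails in general; indeed, for sign-changing or positive $R$ the conclusion of the proposition is false (the limit is not $\max(u^1,-A)$, see Sections 4--5 and Appendix A), so a proof that never uses \fer{nonpositive} cannot be complete. The same remark applies to your alternative identification ``directly check that $\max(u^1,-A)$ solves \fer{variation}'': the constant $-A$ is a supersolution of the PDE part only because $R\le 0$, and the maximum of two supersolutions is a supersolution only thanks to the concavity of the Hamiltonian.

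Two secondary points. The chain $v_\e^A\le v_\e^{A,1}\le u_\e^1+C\e$ is incorrect wherever $u_\e^1<-A$; the correct bound is $v_\e^{A,1}\le \max(u_\e^1,-A)+\e\ln 2$, which still yields the uniform upper bound (the paper gets it even more directly from the maximum principle, since the right-hand side of the $v_\e^A$-equation is nonpositive when $R\le 0$). Also, the uniform Lipschitz/equicontinuity estimates you claim for $v_\e^A$ ``as in Evans--Souganidis'' are doubtful: the zeroth-order term in the $v_\e^A$-equation has a $v$-derivative of order $1/\e$, so Bernstein-type arguments do not obviously give $\e$-independent gradient bounds. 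Fortunately they are not needed: local boundedness plus the relaxed-limits and comparison machinery already gives local uniform convergence, and the initial condition can be handled by the standard barrier argument for relaxed limits at $t=0$, which is what the paper does.
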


We postpone the proof to the end of this section and next
we prove the convergence of the family $(u_\e)_{\e}$ in the sets $\caa$ and $\cbb$. We begin with the former.

Fix $(x_0,t_0)\in \caa$. By the definition of $\caa$ we have $u^1(x_0,t_0)>u_m$ and, hence, we can choose $A$ such that $u^1(x_0,t_0)>-A>u_m$. Proposition \ref{thm:prop} yields that, as $\e\to0$ and uniformly in any neighborhood of $(x_0,t_0)$,
$$
v_\e^{A}\to v^{A,1}=\max(-A,u^1)=u^1.
$$
Using the latter, the choice of $A$ and the fact that
$$
u_\e=v_\e^{A}+\e\ln \big( 1-\exp(\e^{-1}(-A-v_\e^{A})) \big),
$$
we deduce that, as $\e\to0$,  $u_\e \to u^1$ uniformly in any neighborhood of $(x_0,t_0)$.

Next we consider the limiting behavior in the set $\cbb$. To this end, observe that,
using \fer{defue} and \fer{defue1}, we find $u_\e\leq u_\e^1$ and, thus, passing to the limit in the viscosity sense, $\overline u\leq u^1$ and
$$
\overline u< u_m \quad \text{ in } \quad \cbb.
$$

Assume that, for some $(x_0,t_0)\in \cbb$,
$\overline u(x_0,t_0)> -\infty$. Since $\overline u$ is upper semicontinuous (see \cite{GB:94}), there exists a family  $(\phi_\al)_{\al > 0}$ of smooth functions such that $\overline u-\phi_\al$ attains a strict local maximum at some $(x_\al,t_\al)$ and, as $\al\to0$,
$$
(x_\al,t_\al)\to (x_0,t_0), \quad  \overline u(x_\al,t_\al)\geq \overline u (x_0,t_0) \quad \text { and} \quad
\overline u(x_\al,t_\al)\rightarrow \overline u(x_0,t_0).
$$

It follows that there exists points $(x_{\al,\e}, t_{\al,\e})$  such that
$u_\e-\phi_\al$ attains a local maximum at $(x_{\al,\e}, t_{\al,\e})$,
$(x_{\al,\e},t_{\al,\e})\to (x_\al,t_\al)$ as $\e\to0$, and, in view of \fer{eq:ue}, at $(x_{\al,\e}, t_{\al,\e})$,
$$
\phi_{\al,t} -\e \Delta \phi_\al-|D \phi_\al|^2-R\leq -\exp((2\e)^{-1} (u_m-u_\e)).
$$

Letting $\e \to 0$ we find that at $(x_\al,t_\al)$
$$
\phi_{\al,t}-|D \phi_\al |^2-R\leq \limsup_{\e \rightarrow 0}[-\exp ((2\e)^{-1}(u_m-u_\e(x_{\al,\e},t_{\al,\e})))] .
$$

The definition of $\overline u$ yields  $$\limsup_{\e \rightarrow 0}u_\e(x_{\al,\e},t_{\al,\e})\leq \overline u(x_\al,t_\al)$$ and, since, for $\alpha$ sufficiently small, $\overline u(x,t)< u_m$,  we have
$$\overline u(x_\al,t_\al)<u_m \ \ \text{ and } \ \ \limsup_{\e \rightarrow 0}[-\exp((2\e)^{-1}(u_m-u_\e(x_{\al,\e},t_{\al,\e})))]=-\infty$$
and, finally, at $(x_\al,t_\al)$,
$$
\phi_{\al,t}-|D \phi_\al |^2-R\leq -\infty,
$$
which is not possible because $\phi_\al$ is a smooth function.

The claim about the uniform convergence on compact subsets is an immediate consequence of the upper semicontinuity of
$\overline u$ and the previous argument.
\end{proof}

We conclude the section with the proof of Proposition \ref{thm:prop}. Since it is long, before entering in the details, we briefly  describe the main steps. We begin by establishing independent of $\e$ bounds on the family $(v^A_\e)_{\e}$. Then we show that the half-relaxed limits $\overline {v}^\al$ and $\underline {v}^\al$ are respectively sub and supersolutions of (\ref{variation}). We conclude by identifying the limit.
\begin{proof}[Proof of Proposition \ref{thm:prop}]
By the definition of $v_\e^A$, we have $v_\e^A>-A$ and, thus, the family $(v_\e^A)_\e$ is bounded from below.

To prove an upper bound we first notice that, on $\R^d\times\{0\}$,
\beq \label{lnve0}
v_\e^A=u_\e^0+\e\ln(1+e^{\frac{-A-u_\e^0}{\e}})\quad \text{ and } \quad v_\e^A=-A+\e\ln(1+e^{\frac{A+u_\e^0}{\e}}),
\eeq
hence,
$$v_\e^A \leq \max(u_\e^0+\e \ln(2),-A+\e \ln(2)) \quad \text{ on } \quad \R^d \times \{0\},$$
and, finally, in view of \fer{test},
$$ v_\e^A \leq C_A \quad \text{ on } \quad \R^d\times \{0\},$$
for $C_A>0$ such that $\max(-A,u_\e^0)\leq C_A$.

Moreover, since $R \leq 0$,  we have
\begin{equation}\label{eq:2}
v_{\e,t}^A-\e\Delta v_\e^A -|D v_\e^A|^2=\frac{n_\e}{n_\e+\exp(\frac{-A}{\e})}R-\frac{{(\beta_\e n_\e)}^{1/2}}{n_\e+\exp(\frac{-A}{\e})}\leq 0 \quad \text{ in } \quad \R^d\times (0, \infty).
\end{equation}

It follows from the maximum principle that
\begin{equation}\nonumber
v_\e^A \leq C_A+\e \ln(2) \quad \text{ in } \quad \R^d\times (0, \infty).
\end{equation}

Next we show that $\underline{v}^A$ is a supersolution of \fer{variation}. Since $u_m< -A$
and
$$
\frac{{(\beta_\e n_\e)}^{1/2}}{n_\e+\exp(\frac{-A}{\e})}\leq \frac{{(\beta_\e n_\e)}^{1/2}}{2{n_\e\exp(\frac{-A}{\e})}^{1/2}}=\frac{1}{2}\exp(\frac{u_m+A}{2\e}),
$$
as $\e\to0$ and uniformly on $\R^d\times (0, \infty)$, we have
\beq \label{uniformly}
\frac{{(\beta_\e n_\e)}^{1/2}}{n_\e+\exp(\frac{-A}{\e})}\to 0.
\eeq

From \fer{nonpositive}, (\ref{eq:2}) and
$$
0\leq \frac{n_\e}{n_\e+\exp(\frac{-A}{\e})}\leq 1,
$$
we then deduce that, in $\R^d\times (0, \infty)$,
\begin{equation}\label{sursol}
v_{\e,t}^A-\e\Delta v_\e^A -|D v_\e^A|^2\geq R-O(\e),
\end{equation}
while by the definition of $v_\e^A$ we also have
\begin{equation}
\label{vinf}v_\e^A+A\geq 0.
\end{equation}

Combining (\ref{sursol}) and (\ref{vinf}) and using  the basic stability properties of the viscosity solutions (see \cite{GB:94}) we find that the lower semicontinuous function $\underline{v}^A$ is a viscosity supersolution of (\ref{variation}).

To prove that $\overline{v}^A$ is a subsolution to \fer{variation}, following classical arguments from the theory of viscosity solutions (see \cite{GB:94}), we fix a smooth $\phi$ and
assume that $\overline{v}^A-\phi$ has a strict local maximum at $(x_0,t_0)$. It follows  that there exists a family, which for notational simplicity we denote again by $\e$, of points $(x_{\e},t_{\e})_{\e>0}$ in $\R^d\times (0,\infty)$ such that  $v_{\e}^A -\phi$ has a local maximum at $(x_{\e},t_{\e})$, and,  as $\e\to0$,  $(x_{\e},t_{\e})\to (x_{0},t_{0})$ and $v_{\e}^A(x_{\e},t_{\e})\to \overline{v}^A(x_{0},t_{0})$. 

We also know, still using \fer{eq:2} and \fer{uniformly}, that $v_{\e}^A$ solves
$$
v_{\e, t}^A-\e\Delta v_{\e}^A -|D v_{\e}^A|^2=\big(1-\exp(\frac{-A-v_\e^A}{\e})\big)R-O(\e).
$$

It then follows that, at $(x_{\e},t_{\e})$,
\begin{equation}\label{eq:1}
\phi_t-\e\Delta \phi -|D \phi|^2-\big(1-\exp(\e^{-1}(-A-v_\e^A))R\leq O(\e).
\end{equation}

Recall that $\lim_{\e\rightarrow 0} v_\e^A(x_{\e},t_{\e})=\overline v^A(x_0,t_0)\geq -A$. Hence, if
$\overline v^A(x_0,t_0)> -A$, then $$\lim_{\e\rightarrow 0}\exp(\e^{-1}(-A-v_\e^A(x_{\e},t_{\e})))=0.$$

From this and (\ref{eq:1}) we deduce that, if $\overline{v}^A(x_0,t_0)>-A$,  then, at $(x_0,t_0)$,
\begin{equation}\nonumber
\phi_t -|D \phi|^2-R\leq 0.
\end{equation}

Next we show that $\overline v$ and $\underline v$ satisfy the appropriate initial conditions. Indeed,
in view of  \fer{test} and \fer{lnve0}, we know that, as $\e\to0$,
$$
v_\e^{A}\to \max(-A,u^0) \quad \text{ on } \quad \R^d\times\{0\}.
$$

It also follows from a classical argument in theory of viscosity solutions (\cite{GB:94,GB.LE.PS:90}) that, on $\R^d\times\{0\}$,
$$
\overline{v}^A-\max(-A,u^0)\leq 0 \quad \text{ and } \quad \underline{v}^A-\max(-A,u^0)\geq 0.
$$
and, hence, $\overline{v}^A$ and $\underline{v}^A$ satisfy respectively the discontinuous viscosity subsolution and supersolution initial condition corresponding to (\ref{variation}). 

We already know from the definition of $\overline{v}^A$ and $\underline{v}^A$ that
$
\underline{v}^A\leq \overline{v}^A,
$
while
from the comparison property for (\ref{variation}) in the class of semicontinuous viscosity solutions (see \cite{MB.ICD:96,GB:94,C.I.L:92}) we conclude from the steps above that
$
\overline{v}^A\leq \underline{v}^A$ in $\R^d\times(0,\infty).
$
Hence $\underline{v}^A=\overline{v}^A=v^{A,1}$ is the unique continuous viscosity solution of (\ref{variation}) and, consequently, the  families $v_\e^A$ and $v_\e^{A,1}$ converge, as $\e\to0$ and locally uniformly,  to $v^{A,1}$.

Combining \fer{defue} and \fer{defv} we find
$$
v_\e^{A,1}=u_\e^1+\e\ln(1+\exp^{\frac{-A-u_\e^1}{\e}}) \quad \text{ and } \quad v_\e^{A,1}=-A+\e\ln(1+\exp^{\frac{A+u_\e^1}{\e}}).
$$

Moreover, as we already explained it in the introduction (see (\ref{eq:simple})--(\ref{eq:u1})),  we know that, as $\e\to0$,  $u_\e^1\to u^1$ locally uniformly. Hence, always for $A<-u_m$, we obtain that, as $\e\to0$,
$$
v_\e^{A,1} \to \max(u^1,-A) \quad \text{ locally uniformly in } \quad \R^d\times[0,\infty).
$$

It also follows that the family $(v_\e^A)_{\e>0}$  converges, as $\e\to0$, locally uniformly to $v^{A,1}=\max(u^1,-A)$.
\end{proof}

\section{General rate}
\label{Rgeneral}

When $R$ changes sign, the situation is much more complicated and \fer{Rneglim} does not hold in general. In this case we are able to provide only inequalities for the half-relaxed limits of
the family $(u_\e)_{\e>0}$.  These estimates are used later to characterize the limit when $R$ is positive.

Fix $u_0$, $\delta>0$ and recall that  $u^1$ is the solution of (\ref{eq:u1}) with $u^1=u_0$ on $\R^d\times\{0\}$. We introduce next the family $(u_i^{\delta}[u_0], \cac_i^{\delta}[u_0], \Omega_i^{\delta}[u_0])_{i\in\Z^+}$
which is defined iteratively. To this end, for $i=1$, let
\beq\label{step1}
u_1^{\da}[u_0]=u^1, \  \cac_1^{\da}[u_0]=\R^d\times [0,\infty) \ \text{ and } \ \Omega_1^{\da}[u_0]=\{(x,t)\in \R^d\times [0,\infty):  u_1^{\da}[u_0](x,t) > u_m-{\da}\} ,
\eeq
and, given
$u_i^{\da}[u_0], \cac_i^{\da}[u_0]$ and $\Omega_i^{\da}[u_0]$,
$u_{i+1}^{\da}[u_0]:\R^d\times [0,\infty)\rightarrow \R \cup \{-\infty\}$ is defined by
\beq \label{def:-da}
\begin{array}{rl}u_{i+1}^{\da}[u_0](x,t)=
\sup& \left\{ \displaystyle\int_0^t \left[-\f {|\dot x(s)|^2}{4}+R\left(x(s)\right)\right]ds+u_0\left(x(0)\right)\; : \right.\\
&\\
&\left. x\in \mathrm{C}^1([0,t];\R^d),\, (x(s),s)\in \Omega_i^{\da}[u_0]  \;\text{for all }s\in [0,t],\, x(t)=x \right\},
\end{array}
\eeq
with
\beq \label{def:cac-da}
\cac_{i+1}^{\da}[u_0]=\{(x,t)\in \Omega_i^\da[u_0]:  \; u_{i+1}^{\da}[u_0](x,t)>-\infty\}
\eeq
and
\beq\label{def:omega}
\Omega_{i+1}^{\da}[u_0]=\{(x,t)\in \Omega_i^\da[u_0]: \;u_{i+1}^{\da}[u_0](x,t) > u_m-\da\}\subset\cac_{i+1}^\da[u_0].
\eeq

It follows that, in general, $\cac_{i+1}^{\da}[u_0]\subseteq\Omega_i^{\da}[u_0].$
The inclusion may be, however, strict, i.e., they may exist
points $(\bar x,\bar t)\in \Omega_i^\da[u_0]$ which cannot be connected to $\R^d\times\{0\}$ by a $C^1$ trajectory staying, for all $s\in[0,t]$ in $\Omega_i^\da[u^0]$. (See Figure \ref{fig: omega}.)

Moreover  \fer{assum:R}, \fer{step1} and classical considerations from the optimal control theory (\cite{Lionsbook:82, WF.HS:93,MB.ICD:96,PC:09}) yield that,
for all $i\in\Z^+$,
the sets $\cac_i^{\da}[u_0]$ and ${\Omega_i^{\da}}$ are open and $u_i^{\da}[u_0] \in \mathrm{C}(\cac_i^{\da}[u_0]).$

Note that the state constraint boundary condition, i.e., the requirement that the trajectories stay inside the domain, is hidden in the control formula. We do not write it, however, explicitly, because, to the best of our knowledge, there is no general theory, as in \cite{HS:86}, for state constraint problem with time varying and nonsmooth domains. Note that in our context we have no regularity properties for these domains.

\begin{figure}
 \begin{center}
   \input{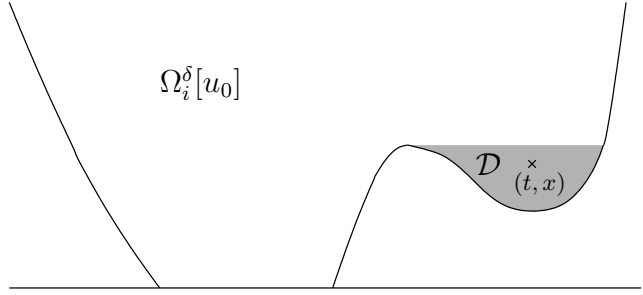}
   \\[-2mm]
   \caption{An example of the space-time set $\Omega_i^\da[u_0]$. The point $(x, t)\in \Omega_i^\da[u^0]$ cannot be connected to $\R^d\times\{0\}$ by a $C^1$ trajectory $(x(s),s)_{s\in [0,t]}$ staying within $\Omega_i^\da[u^0]$. More generally, for the points in the grey area, called $\cad$, there is no admissible trajectory. We have indeed $\cac_{i+1}^{\da}[u_0]=\Omega_i^\da[u_0]\setminus\cad$.}
   \label{fig: omega}
 \end{center}
\end{figure}

Given $\cac_{i+1}^{\da}[u_0]$ as in \fer{def:cac-da}, it turns out that $u_{i+1}^{\da}[u_0]$
is the minimal viscosity solution to
\beq \label{-da}
\begin{cases}
u_{i+1,t}^{\da}[u_0]=|D u_{i+1}^{\da}[u_0]|^2+R \quad  \text{ in } \quad \cac_{i+1}^{\da}[u_0],
\\[2mm]
u_{i+1}^{\da}[u_0]= u_0 \quad \text{ in } \cac_{i+1}^{\da}[u_0]\cap(\R^d\times\{0\}).
\end{cases}
\eeq

Indeed using standard arguments from optimal control theory (see, for example, \cite{MB.ICD:96,GB:94}), we may easily see that $u_{i+1}^\da[u_0]$ satisfies the dynamic programming principle. The latter, as usual, implies that $u_{i+1}^\da[u_0]$ is a viscosity solution of \fer{-da}. The proof of the fact that $u_{i+1}^\da[u_0]$ is a minimal solution to \fer{-da} in Appendix \ref{ap:control}.

The family $(u_i^{\da}[u_0])_{i\in\Z^+,\delta>0}$ is nonincreasing in both $i$ and $\delta$. Therefore there exists $U^{\da}[u_0]\geq -\infty$, which is itself nonincreasing in $\delta$, such that, as $i \to +\infty$,
$
u_i^{\da}[u_0] \searrow~U^{\da}[u_0] \quad \text{ in } \quad \R^d\times[0,\infty).
$

Let $U[u_0]$ be the limit, as $\delta\to0$, of the family $(U^{\da}[u_0])_{\delta>0}$
and, for $\mu>0$, consider the nonincreasing (in $\delta$) family of sets
\beq \label{omega}
\Omega^{\da}[u_0]=\bigcap_{i\in\Z^+} \Omega_i^{\da}[u_0] \quad \text{ and } \quad \Omega[u_0-\mu] =\bigcap_{\da>0}\Omega^\da[u_0-\mu].
\eeq

We have:
\begin{theorem}\label{3.1}
 Let $n_\e$ be the solution to \fer{eq:sroot}, $u_\e=\e \ln(n_\e)$ and assume  \fer{assum:R}. Then,
 for any $\mu>0$,
\beq \label{eq:limsupu}
\overline u\leq  U[u_0] \quad  \text{
in}  \quad  \R^d\times [0,\infty) \quad \text{ and } \quad  U[u_0-\mu]+\mu\leq \underline u \quad \text{
in } \quad \Omega[u_0-\mu].
\eeq
\end{theorem}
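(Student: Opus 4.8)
\textbf{Plan for the proof of Theorem \ref{3.1}.}
The two inequalities in \fer{eq:limsupu} are proved separately, each by the standard perturbed-test-function/half-relaxed-limit machinery, with the iteration of Section \ref{Rgeneral} providing the sandwiching sequences. For the upper bound $\overline u\leq U[u_0]$, the plan is to prove by induction on $i$ that $\overline u\leq u_i^{\da}[u_0]$ in $\cac_i^{\da}[u_0]$ and that $\overline u=-\infty$ outside $\overline{\Omega_i^{\da}[u_0]}$, for every $\da>0$; then let $i\to\infty$ and $\da\to0$. The base case $i=1$ is exactly the comparison $u_\e\leq u_\e^1$ coming from the maximum principle ($n_\e\leq n_\e^1$) together with the convergence $u_\e^1\to u^1$ from \cite{LE.PS:89}, and the fact (argued formally in the introduction) that $u_\e\to-\infty$ off $\overline{\Omega^1}$; the argument that $\overline u=-\infty$ where $u^1<u_m$ is the one already carried out in the proof of Theorem \ref{thm:principal} (testing \fer{eq:ue} against a smooth $\phi$ at a would-be finite maximum and getting a contradiction from the blow-up of the exponential term), and it localizes verbatim since it only uses $\overline u<u_m$ near the point. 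For the inductive step, one knows $\overline u=-\infty$ outside $\overline{\Omega_i^{\da}[u_0]}$, so $\overline u$ is a subsolution of $u_t=|Du|^2+R$ inside $\Omega_i^{\da}[u_0]$ (this is the limiting equation from \fer{eq:ue}, the exponential term being $\leq 0$), with the correct initial data on $\cac_{i+1}^{\da}[u_0]\cap(\R^d\times\{0\})$; since $u_{i+1}^{\da}[u_0]$ is, by \fer{-da} and the optimal-control formula, a solution of the same equation on $\cac_{i+1}^{\da}[u_0]$ that dominates every subsolution with these boundary/initial conditions (the control-formula representation makes this domination immediate, the value function being the supremum over admissible trajectories), one concludes $\overline u\leq u_{i+1}^{\da}[u_0]$ on $\cac_{i+1}^{\da}[u_0]$ and, again by the same blow-up argument, $\overline u=-\infty$ outside $\overline{\Omega_{i+1}^{\da}[u_0]}$. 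Passing to the limit $i\to\infty$ gives $\overline u\leq U^{\da}[u_0]$, and $\da\to0$ gives $\overline u\leq U[u_0]$.

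For the lower bound $U[u_0-\mu]+\mu\leq\underline u$ in $\Omega[u_0-\mu]$, the idea is to build, for each $i$ and $\da$, an explicit subsolution of the $\e$-problem \fer{eq:ue} below $u_\e$ whose half-relaxed liminf is $u_i^{\da}[u_0-\mu]+\mu$, so that $\underline u\geq u_i^{\da}[u_0-\mu]+\mu$ on the relevant set, and then let $i\to\infty$, $\da\to0$. The shift by $\mu$ is exactly what makes the construction robust: on $\Omega^{\da}[u_0-\mu]$ one has $u_i^{\da}[u_0-\mu]+\mu>u_m-\da+\mu$, so (choosing $\da<\mu$, then $\da\to0$) the candidate stays strictly above $u_m$, which means the singular term $\exp((u_m-u_\e)/2\e)$ is exponentially small along it and does not obstruct the subsolution inequality $u_{\e,t}-\e\Delta u_\e-|Du_\e|^2\leq R$; meanwhile the $-\mu$ lowering of the initial data absorbs the $O(\e)$ errors and the $\e\Delta$ term coming from smoothing the (only Lipschitz, via the control formula) function $u_i^{\da}[u_0-\mu]$. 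Concretely I would take the sup-convolution or a small smooth perturbation of $u_i^{\da}[u_0-\mu]$, restricted to a slightly shrunk subdomain of $\cac_i^{\da}[u_0-\mu]$, extend it by a large negative constant (or $-\infty$ in the Hopf--Cole variable, i.e. a tiny $n_\e$) outside, check it is a subsolution of the reaction-diffusion equation \fer{eq:sroot} — here one uses that decreasing $n_\e$ only helps because $R$ appears multiplied by $n_\e$ and $-(\beta_\e n_\e)^{1/2}$ is increasing in $-n_\e$ — compare with $n_\e$ by the maximum principle, and pass to the liminf. Iterating in $i$ works because the control formula for $u_{i+1}^{\da}$ only uses trajectories inside $\Omega_i^{\da}$, where the lower bound from step $i$ is already available to feed into the next comparison.

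The main obstacle, as the authors themselves flag, is the rigorous handling of the state-constraint/boundary behavior on the time-varying, nonsmooth free boundaries $\partial\Omega_i^{\da}[u_0]$: there is no off-the-shelf comparison principle (à la \cite{HS:86}) for Hamilton--Jacobi equations on such domains, so the identification of $u_{i+1}^{\da}[u_0]$ with the minimal solution of \fer{-da} and the domination of $\overline u$ by it must be done by hand through the optimal-control representation rather than by invoking uniqueness; this is presumably why \fer{-da} is only asserted with the control formula and the minimality proof is deferred to Appendix \ref{ap:control}. A secondary technical point is that $\overline u$ and $\underline u$ are only defined for \emph{bounded} families in \fer{limits}, whereas here $u_\e\to-\infty$ on part of the domain, so one must work with the truncated quantities $\max(u_\e,-A)$ (equivalently the $v_\e^A$ of the previous section) to make the half-relaxed limits legitimate, and then remove the truncation by letting $A\to\infty$ — exactly as in the proof of Theorem \ref{thm:principal}. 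Finally one must be a little careful that the strict inclusions $\cac_{i+1}^{\da}[u_0]\subsetneq\Omega_i^{\da}[u_0]$ (the grey region $\cad$ in Figure \ref{fig: omega}) do not cause trouble: on $\cad$ there is simply no admissible trajectory, $u_{i+1}^{\da}[u_0]=-\infty$ there, and the blow-up argument shows $\overline u=-\infty$ there as well, so the bookkeeping is consistent.
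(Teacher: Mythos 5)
Your overall architecture for the upper bound (induction on $i$, base case from $n_\e\leq n_\e^1$, the exponential blow-up argument where $\overline u$ is below $u_m$, then $i\to\infty$, $\da\to0$) matches the paper, but it has a genuine gap at the grey region $\cad=\Omega_i^{\da}[u_0]\cap(\cac_{i+1}^{\da}[u_0])^c$ of points not reachable by admissible trajectories. You claim ``the blow-up argument shows $\overline u=-\infty$ there as well'', but that argument needs $\overline u<u_m$ at the test point so that $-\exp((2\e)^{-1}(u_m-u_\e))\to-\infty$; on $\cad\subset\Omega_i^{\da}[u_0]$ the induction hypothesis only gives $\overline u\leq u_i^{\da}[u_0]$, which there is $>u_m-\da$ and may be far above $u_m$, so the exponential term gives nothing. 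The paper needs a separate argument: it introduces the truncations $\overline w^A=\max(-A,\overline u)$, which satisfy the variational inequality in $\cad$ with $\overline w^A=-A$ on the parabolic boundary $\p_p\cad\subset\p\Omega_i^{\da}[u_0]$ (using the already-established $\overline u=-\infty$ there), compares with the supersolution $-A+C_1t$, and lets $A\to\infty$. Relatedly, your assertion that the control formula makes the domination of subsolutions by $u_{i+1}^{\da}[u_0]$ ``immediate'' is backwards: the constrained value function is naturally the \emph{minimal} supersolution (that is what Appendix \ref{ap:control} proves); the paper gets $\overline u\leq u_{i+1}^{\da}[u_0]$ on $\cac_{i+1}^{\da}[u_0]$ via the max function $\overline z=\max(\overline u,u_{i+1}^{\da}[u_0])$, the subsolution property of a max of subsolutions, the boundary information $\overline u=-\infty$ on $\p\cac_{i+1}^{\da}[u_0]$, and a comparison on $\cac_{i+1}^{\da}[u_0]$.

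The lower bound is where your route diverges most and where it would fail as described. You propose to smooth $u_i^{\da}[u_0-\mu]$, restrict to a shrunk subdomain of $\cac_i^{\da}[u_0-\mu]$, ``extend by a large negative constant outside'', and compare with $n_\e$ by the maximum principle. The gluing is the problem: $u_i^{\da}[u_0-\mu]$ does not tend to $-\infty$ at the lateral boundary of this nonsmooth, time-varying set (it stays above $u_m-\da$ on $\Omega_i^{\da}[u_0-\mu]$), so the glued function is not a global subsolution across the interface, and you have no control of $n_\e$ on that lateral boundary; making the candidate degenerate at the boundary would require a penalty construction on exactly the kind of domain for which, as the authors stress, no state-constraint theory is available. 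The paper sidesteps this entirely: Lemma \ref{lem:maxvi} (Barron--Jensen) shows $v_i^{\da}=\max(u_i^{\da}[u_0-\mu]+2\da,\underline u)$ is a supersolution \emph{inside} $\Omega_i^{\da}[u_0-\mu]$ — no lateral boundary condition is needed — using inf-convolution/mollification plus the $+2\da$ margin to kill the singular term, and then invokes the minimality of $u_{i+1}^{\da}[u_0-\mu]$ among supersolutions with initial data $u_0-\mu$ to get $u_{i+1}^{\da}[u_0-\mu]+\mu\leq\max(u_i^{\da}[u_0-\mu]+2\da,\underline u)$, with the choice $\mu>2\da$ forcing the $\underline u$ branch. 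Note also that your bookkeeping of the extra $+\mu$ does not come out of your construction: a candidate built from $u_i^{\da}[u_0-\mu]$ and placed below $n_\e$ at $t=0$ yields at best $\underline u\geq u_i^{\da}[u_0-\mu]$; to gain the $+\mu$ the candidate's initial trace would have to be $u_0$ itself, leaving no room under $u_\e^0$, whereas in the paper the $\mu$ appears because the supersolution $v_i^{\da}$ carries initial data $u_0$ while the minimal solution carries $u_0-\mu$.
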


Before we present the proof we remark that,
by definition, $u_i^{\da}[u_0]=-\infty$ in $\big({\cac_i^{\da }[u_0]}\big)^c$. Therefore  $U^{\da}[u_0]=-\infty$ in $(\Omega^{\da}[u_0])^c$ and, finally,  $U[u_0]=-\infty$ in $(\Omega[u_0])^c =\big({\bigcap_{i,\da} \cac_i^{\da}[u_0]}\big)^c=\big({\bigcap_\da \Omega^{\da}[u_0]}\big)^c$, and, hence,
$$\overline{u}=-\infty \quad \text{ in } \quad (\Omega[u_0])^c.$$

Moreover, since $u_i^{ \da}[\cdot]\geq u_m- \da$ in $\Omega^{\da}[\cdot]$, by passing to the limit $i\to \infty$ and  $\da\to 0$ we also obtain  $$U[\cdot] \geq u_m \quad \text{ in } \quad  \Omega[\cdot].$$

An important question is whether, as $\mu\to0$,
$U[u_0-\mu] \to U[u_0]$. This is, in general, not true. A counterexample can be found  for $u^0=u_m$ and $R>0$. Then $\Omega_1^{\da}[u_0-\mu]$ cannot touch $\R^d\times\{0\}$ and $u_i^{\da}[u_0-\mu] \equiv -\infty$. Therefore $U[u_0-\mu]\equiv -\infty$ for any $\mu>0$. On the other hand, $u_i^{\da}[u_0] > u_m$ and $U[u_0] =u^1$.

We continue with the

\begin{proof}[Proof of Theorem \ref{3.1}] First we show by induction that, for all $\da>0$ and $i\in\Z^+$,
$
\overline u \;\leq \; u_i^{\da}[u_0].
$

Since $n_\e^1$ is a supersolution to  \fer{eq:sroot}, it follows from the comparison principle that $n_\e\leq n_\e^1$ and, hence, $\overline u \leq u_1^{\da}[u_0]=u^1$.

Next we assume that $\overline u \, \leq \, u_i^{\da}[u_0]$, and, arguing by contradiction, we show, following  an argument similar to that in Section~2, that $\overline u\leq u_{i+1}^\da[u_0]=-\infty$ in $(\Omega_i^{\da}[u_0])^c$.

To this end, suppose that, for some $(x_0,t_0)\in (\Omega_i^{\da}[u_0])^c$, $\overline u(x_0,t_0)> -\infty$.
Since $\overline u$ is upper semicontinuous, there exists a family $(\phi_\al)_{\al>0}$ of smooth functions such that $\overline u-\phi_\al$ attains a strict local maximum at $(x_\al,t_\al)$ and, as $\al\to0$,
$(x_\al,t_\al)\to (x_0,t_0)$, $\overline u(x_\al,t_\al)\geq \overline u (x_0,t_0),$
and, consequently, $\overline u(x_\al,t_\al)\to u(x_0,t_0)$.
It follows that there exist points $(x_{\al,\e}, t_{\al,\e})\in(\R^d\times (0,\infty))$ where $u_\e-\phi_\al$ attains a local maximum 
and, as $\e\to0$,
$(x_{\al,\e},t_{\al,\e})\to (x_\al,t_\al).$

Moreover, in view of  \fer{eq:ue},  at  $(x_{\al,\e},t_{\al,\e})$,
$$
\phi_{\al,t} -\e \Delta \phi_\al -|D \phi_\al|^2-R\leq -\exp((2\e)^{-1}(u_m-u_\e)).
$$

Letting $\e\to0$ yields, at $(x_\al,t_\al)$,
$$
\phi_{\al,t}-|D \phi_\al |^2-R\leq \limsup_{\e \to 0}(-\exp[(2\e)^{-1}(u_m-u_\e(x_{\al,\e},t_{\al,\e}))]).
$$

Since, by the definition of $\overline u$, we have $\limsup_{\e\rightarrow 0} u_\e(x_{\al,\e},t_{\al,\e})\leq \overline u(x_\al,t_\al)$, the induction hypothesis yields that, for $\al$ small enough,
$\overline u(x_\al,t_\al)\leq u_i^{\da}[u_0](x_\al,t_\al) \leq u_m-\da/2$.

It follows that
$$\limsup_{\e \to 0}(-\exp[(2\e)^{-1}(u_m-u_\e(x_{\al,\e},t_{\al,\e}))])=-\infty,$$
and, hence, at $(x_\al,t_\al)$,
$$
\phi_{\al,t} - |D \phi_\al |^2-R\leq -\infty,
$$
which, of course, is not possible because $\phi_\al$ is a smooth function.

Hence we have $\overline u=-\infty$ in $(\Omega_i^{\da})^c$ and, in particular, $\overline u =-\infty$ on $\p \Omega_i^{\da}[u_0]$.
\\

Next we show that $$\overline u\leq u_{i+1}^\da[u_0]=-\infty \quad \text{ in } \quad (\cac_{i+1}^{\da}[u_0])^c.$$

To this end, let $(\bar x,\bar t)\in (\cac_{i+1}^{\da}[u_0])^c\setminus (\Omega_i^{\da}[u_0])^c$. Note that the existence of such a point means that $(\bar x,\bar t)$ cannot be connected to $\R^d\times\{0\}$ by a $\mathrm {C}^1$-trajectory staying in $\Omega_i^\da[u_0]$. Hence $(\bar x,\bar t)$ belongs to a connected component $\cad$ of
$
\omega_i^\da[u_0]=\{(y,s)\in \Omega_i^{\da}[u_0]: \, s\leq \bar t\},
$
which does not touch $\R^d\times\{0\}$. (See Figure \ref{fig: omega}.)

Therefore $\p_p \cad\subset \p \Omega_i^{\da}[u_0]$, where   $\p_p \cad=\{(y,s)\in\p\cad : s<\bar t\}$ is the parabolic boundary of $\cad$. From the previous argument we obtain
\beq \label{pcad}
\overline u=-\infty \qquad\text{on }\p_p \cad.
\eeq

As in \fer{defv}, for $A>0$, we define $w_\e^{A}$ by
$
n_\e+\exp\big( \f{-A}{\e}\big)=\exp\big(\f{w_\e^A}{\e}\big).
$
Arguing as in the previous section, we deduce that, for all $A>0$,
$$
\overline w^A=\max(-A, \overline u) \quad \text{ and } \quad
\min(\overline w^A+A,\, \overline w_t^A-|D \overline w^A|^2-R)\leq 0,
$$
and, in view of  \fer{pcad}, 
$$\bc
\min(\overline w^A+A,\, \overline w_t^A-|D \overline w^A|^2-R)\leq 0 \quad \text{in } \quad \cad,\\
\overline w^A=-A \quad  \text{in } \quad \p_p\cad,
\ec$$
which admits $-A+C_1 t$ as a supersolution for some $C_1>0$.

It follows from the comparison principle that, for all $A>0$,
$$\overline u \leq \overline w^A\leq -A+C_1t \quad \text{in } \quad \cad.$$

Letting $A\to\infty$ yields $\overline u=-\infty$ in $\cad$ and, consequently, $\overline u(\bar x,\bar t)=-\infty$. Observe that $\overline u=-\infty$ in $(\cac_{i+1}^\da[u_0])^c$ implies that $\overline u=-\infty$ on $\p \cac_{i+1}^\da[u_0]\cap (\R^d \times [0,\infty))$.\\

Finally we show that $$\overline u\leq u_{i+1}^{\da}[u_0] \quad \text{ in } \quad \cac_{i+1}^{\da}[u_0].$$

To this end, define $z_\e$ by
$
n_\e+\exp\big( \f{u_{i+1}^\da[u_0]}{\e}\big)=\exp\big(\f{z_\e}{\e}\big)
$
and notice that
$$
z_\e=u_{i+1}^\da[u_0]+\e \ln \left(\exp\left(\f{u_\e-u_{i+1}^\da[u_0]}{\e}\right)+1\right)=
u_\e+\e \ln \left(\exp\left(\f{u_{i+1}^\da[u_0]-u_\e}{\e}\right)+1\right).
$$

It follows that
$$
\overline z=\max(\overline u,\,u_{i+1}^\da[u_0]).
$$

We claim that $\overline z$ is a {subsolution} of
\beq\label{eq:z*} \overline z_t -|D \overline z|^2-R \leq 0 \quad \text{in } \quad \cac_{i+1}^\da[u_0].\eeq

Indeed $u_{i+1}^\da[u_0]$ is a subsolution to \fer{eq:z*} in $\cac_{i+1}^\da[u_0]$ by definition. Moreover if, for some $(\bar x,\bar t)\in \cac_{i+1}^\da[u_0]$, $\overline u(\bar x,\,\bar t)\neq -\infty$, using \fer{eq:ue} and the stability of viscosity subsolutions, we find that $\overline u$ satisfies the viscosity subsolution criteria for \fer{eq:z*} at $(\overline x,\, \overline t)$. Finally, since the maximum of two subsolutions is always a subsolution, we obtain that $\overline z$ is a subsolution of \fer{eq:z*}.

We proceed by noticing that, since, in view of the above,
$$\overline u=-\infty \quad \text{ on } \quad \p \cac_{i+1}^\da\cap (\R^d\times (0,+\infty)),$$
it follows that $$\overline z=u_{i+1}^\da[u_0] \quad \text{ on } \quad \p \cac_{i+1}^\da[u_0]$$
and, hence,
$$\overline z\leq u_{i+1}^\da[u_0] \quad \text{ on } \quad \p \cac_{i+1}^\da[u_0].$$
Therefore, using again the comparison principle for \fer{-da}, we obtain
$$
\overline z\leq u_{i+1}^\da[u_0] \quad \text{in } \quad \cac_{i+1}^\da[u_0]
$$
and we conclude that $\overline u\leq u_{i+1}^\da[u_0].$
\\
%
%
%

Finally, since for all $\delta>0$ and $i\in\Z^+$, we have
$\overline u\leq u_i^\da[u_0]$ it follows that, for all $\da>0$,  $\overline u\leq \lim_{i\rightarrow \infty} u_i^\da[u_0]= U^{\da}[u_0]$. After letting $\da\to0$ we obtain
$$
\overline u  \leq \lim_{\da\rightarrow 0} \,U^{\da}[u_0]=U[u_0] \quad \text{in } \quad \R^d\times(0,\infty),
$$
which concludes the proof of the first part of the claim.

For the second part we need the following lemma which is essentially a result from \cite{EB.RJ:90} that we adapt to our context (see also \cite{GB:BJ}). Its proof is postponed to the end of this section.

\begin{lemma}\label{lem:maxvi}
For all $i\in\Z^+$  the lower semicontinuous function
$v_i^\da=\max(u_i^\da[u_0-\mu]+2\da,\,\underline u)$,
is a supersolution of
\beq \label{supervida}
\begin{cases}
v_{i,t}^\da -|D v_i^\da|^2-R \geq 0 \quad  \text{in } \quad \Omega_i^\da[u_0-\mu],
\\[2mm]
v_i^\da =u_0 \quad  \text{ in  } \{u_0-\mu>u_m-\da\}\cap (\R^d \times \{0\}).
\ec
\eeq
\end{lemma}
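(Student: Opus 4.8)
The plan is to show that $v_i^\da=\max(u_i^\da[u_0-\mu]+2\da,\,\underline u)$ is a viscosity supersolution of the equation in $\Omega_i^\da[u_0-\mu]$ by treating the two functions in the maximum separately, and then invoking the general principle (the adaptation of \cite{EB.RJ:90}) that the maximum of two supersolutions is a supersolution. First I would observe that $u_i^\da[u_0-\mu]$, by construction \fer{-da}, satisfies $u_{i,t}^\da[u_0-\mu]-|Du_i^\da[u_0-\mu]|^2-R=0$ in $\cac_i^\da[u_0-\mu]$; since adding the constant $2\da$ does not change the left-hand side (the equation has no zeroth-order term), $u_i^\da[u_0-\mu]+2\da$ is a (super)solution there, and in particular in $\Omega_i^\da[u_0-\mu]\subset\cac_i^\da[u_0-\mu]$. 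Next, for $\underline u$: one needs that $\underline u$ is a supersolution of $\underline u_t-|D\underline u|^2-R\ge 0$ wherever $\underline u>-\infty$. This follows from the equation \fer{eq:ue} for $u_\e$ and the standard stability of viscosity supersolutions under half-relaxed limits, exactly as in the supersolution argument for $\underline v^A$ in Section~2: the extra term $-\exp((u_m-u_\e)/2\e)$ is nonpositive, so $u_{\e,t}-\e\Delta u_\e-|Du_\e|^2-R\le 0$, i.e. $u_\e$ is a \emph{subsolution} of that inequality; but we want the reversed inequality for $\underline u$. Here the point is subtler — one uses that $\underline u\ge u_m$ on the relevant set (so the singular term stays bounded and in fact the relevant test-function touching occurs only where $\underline u>-\infty$), hence in the limit the singular term contributes a quantity $\le 0$ that can be absorbed, giving $\underline u_t-|D\underline u|^2-R\ge 0$ in the viscosity sense at points where $\underline u$ is finite; where $\underline u=-\infty$ there is nothing to check.

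The main work is the matching of the initial/boundary data and the handling of the parabolic boundary of $\Omega_i^\da[u_0-\mu]$. On $\{u_0-\mu>u_m-\da\}\cap(\R^d\times\{0\})$ we have, from \fer{def:-da} with $i$, that $u_i^\da[u_0-\mu]=u_0-\mu$ at $t=0$ (take the constant trajectory, which is admissible there since the point lies in $\Omega_1^\da[u_0-\mu]$ and, by a short induction, in $\Omega_i^\da[u_0-\mu]$), so $u_i^\da[u_0-\mu]+2\da = u_0-\mu+2\da$. To get $v_i^\da\ge u_0$ at such points I would use the other half of the maximum: from \fer{test} and the definition of $\underline u$, $\underline u\ge u^0$ on $\R^d\times\{0\}$, so $v_i^\da\ge\underline u\ge u_0$ there — actually for the \emph{supersolution initial condition} in the sense of \cite{GB:94} one only needs $v_i^\da\ge u_0$ in the relaxed sense, which $\underline u$ already provides. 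I expect the genuinely delicate point to be the interaction of $\underline u$ with $\partial\Omega_i^\da[u_0-\mu]$ and with the set $\cac_{i+1}^\da$: where a test function touches $v_i^\da$ from below at an interior point of $\Omega_i^\da[u_0-\mu]$ at which $u_i^\da[u_0-\mu]+2\da<\underline u$, the constraint is inactive and only the $\underline u$-inequality is used; at a point where $u_i^\da[u_0-\mu]+2\da\ge\underline u$, the $u_i^\da$-equation gives equality and hence the supersolution inequality; and the case $\underline u=-\infty$ forces $v_i^\da=u_i^\da[u_0-\mu]+2\da$ locally, reducing again to \fer{-da}. Thus the supersolution property is inherited on all of $\Omega_i^\da[u_0-\mu]$.

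The one structural subtlety I want to flag as the real obstacle is justifying the supersolution property of $\underline u$ for the inequality $\underline u_t-|D\underline u|^2-R\ge 0$ on the \emph{full} set $\Omega_i^\da[u_0-\mu]$ rather than merely on $\cbb$-type sets: unlike Section~2, where the auxiliary $v_\e^A$ was used precisely to avoid taking the relaxed limit of $u_\e$ directly near $-\infty$, here I would again route through the functions $w_\e^A$ with $n_\e+e^{-A/\e}=e^{w_\e^A/\e}$, pass to the limit to get that $\max(-A,\underline u)$ is a supersolution of the obstacle problem $\min\big(w+A,\,w_t-|Dw|^2-R\big)\ge 0$ (as established in the proof of Theorem~\ref{3.1}), and then note that at any point where $\underline u>-A$ the obstacle is inactive, giving exactly $\underline u_t-|D\underline u|^2-R\ge 0$ there; letting $A\to\infty$ and using $\underline u\ge u_m>-\infty$ wherever needed transfers this to every point of $\Omega_i^\da[u_0-\mu]$ at which $\underline u$ is finite. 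Combining this with the elementary supersolution property of $u_i^\da[u_0-\mu]+2\da$ and the stability of the sup/max operation under the lower-semicontinuous envelope completes the proof.
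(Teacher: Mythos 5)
There is a genuine gap, and it sits exactly where you flagged the ``real obstacle'': your argument needs $\underline u$ to be a viscosity supersolution of $\underline u_t-|D\underline u|^2-R\ge 0$ throughout $\Omega_i^\da[u_0-\mu]$ (wherever finite), and neither of your two routes delivers this. From \fer{eq:ue} the right-hand side $-\exp\big((u_m-u_\e)/(2\e)\big)$ is nonpositive, which is the favorable sign for subsolutions but the wrong one for supersolutions; in the half-relaxed-limit argument this term can only be discarded at points where $\underline u>u_m$ strictly. But $\Omega_i^\da[u_0-\mu]$ is defined through $u_i^\da[u_0-\mu]>u_m-\da$, not through $\underline u$, and no lower bound $\underline u\ge u_m$ is available there --- a lower bound on $\underline u$ is precisely what Lemma \ref{lem:maxvi} is designed to produce, so assuming it is circular. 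The fallback through $w_\e^A$ does not repair this: the singular term satisfies $(\beta_\e n_\e)^{1/2}(n_\e+e^{-A/\e})^{-1}\le \frac12 e^{(u_m+A)/(2\e)}$, which tends to $0$ only if $A<-u_m$, so you cannot ``let $A\to\infty$''; what you can extract is again only the supersolution inequality where $\underline u>u_m$. Moreover, the obstacle-problem supersolution property of $\max(-A,\underline u)$ that you invoke was proved in Section 2 only under $R\le 0$ (the factor $n_\e/(n_\e+e^{-A/\e})\in[0,1]$ multiplying $R$ has the right sign only then), while in the proof of Theorem \ref{3.1} only the subsolution half for $\overline w^A$ is established; for positive $R$ the supersolution half is simply false where the truncation is active. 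Finally, even granting two supersolutions, the pointwise case analysis in your middle paragraph (``where the max equals $\underline u$, use the $\underline u$-inequality'') is not a valid viscosity argument: a test function touching $\max(f,g)$ from below need not touch either $f$ or $g$ from below, which is exactly why the Barron--Jensen convexity machinery, and not a soft splitting, is required. (A smaller issue: $\underline u\ge u^0$ on all of $\R^d\times\{0\}$ is not true in general --- where $u^0<u_m$ there is instantaneous extinction --- though it is plausible on the set $\{u_0-\mu>u_m-\da\}$ where the lemma asserts the initial condition.)

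The paper's proof avoids any supersolution property of $\underline u$ alone, and this is its essential mechanism. It regularizes $u_i^\da[u_0-\mu]$ by inf-convolution and mollification into a smooth, semiconcave $u_i^{\da,\al,\bt}$, which by Jensen's inequality is a classical supersolution up to errors $K\al+\e/\al$ (this also handles the $\e\Delta$ term), and then forms the maximum \emph{before} passing to the limit, at the $\e$-level, via $n_\e+\exp\big((u_i^{\da,\al,\bt}+2\da)/\e\big)=\exp\big(v_i^{\da,\al,\bt,\e}/\e\big)$. The key point --- and the reason for the shift by $2\da$ --- is that on $\Omega_i^\da[u_0-\mu]$ the obstacle satisfies $u_i^\da+2\da>u_m+\da$, so the square-root term is dominated by $\frac12\,\beta_\e\exp\big(-(2\e)^{-1}(u_i^{\da,\al,\bt}+2\da)\big)\to 0$ uniformly, regardless of how small $n_\e$ (equivalently, how negative $u_\e$) is; the supersolution inequality for $v_i^\da$ then follows by stability as $\e,\al,\bt\to 0$. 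Any repair of your plan must obtain the control of the singular term from the obstacle $u_i^\da[u_0-\mu]+2\da$ in this way, not from an unavailable lower bound on $\underline u$.
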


Since $u_{i+1}^\da[u_0-\mu]$ is a minimal solution of \fer{-da} in $\cac_{i+1}^\da[u_0-\mu] \subset \Omega_i^\da[u_0-\mu]$
with \\
$u_{i+1}^\da[u_0-\mu]=u_0-\mu$ on $\R^d\times \{0\}$ (see Appendix \ref{ap:control}), it follows that $$ u_{i+1}^\da[u_0-\mu]\leq v_i^\da-\mu \quad \text{  in } \quad  \cac_{i+1}^\da[u_0-\mu],$$ and, hence,
$$u_{i+1}^\da[u_0-\mu]+\mu \leq \max(u_i^\da[u_0-\mu]+2\da,\, \underline u)  \quad \text{in } \quad \cac_{i+1}^\da[u_0-\mu].$$

Letting $i\to\infty$ yields
$$U^\da[u_0-\mu]+\mu  \leq \max(U^\da[u_0-\mu]+2\da,\, \underline u) \quad \text{in } \quad \Omega^\da[u_0-\mu].$$

Choosing $\mu>2\da$ we also get
$$U^\da[u_0-\mu]+2\da  < U^\da[u_0-\mu]+\mu \quad \text{in } \quad \Omega^\da[u_0-\mu],$$
and, therefore,
$$U^\da[u_0-\mu]+\mu\leq \underline u \quad \text{in } \quad \Omega^\da[u_0-\mu].$$

Finally letting $\da\to0$ we obtain
$$
U[u_0-\mu]+\mu=\lim_{\da\rightarrow 0} \;U^\da[u_0-\mu]+\mu\leq \underline u \quad \text{in } \quad \Omega[u_0-\mu].$$

\end{proof}

We conclude with the
\begin{proof}[Proof of Lemma \ref{lem:maxvi}]
The key idea of the proof comes from \cite{EB.RJ:90} and \cite{GB:BJ} and relies on the property that, for concave Hamiltonians, the maximum of two supersolutions is supersolution. Here we reprove this fact in the context of semicontinuous  supersolutions in a space-time domain.

To this end,
fix $i\in\Z^+$ and $(x,t)\in \cac_i^\da[u_0-\mu]$. Since $\cac_i^\da[u_0-\mu]$ is an open set, there exists $\rho>0$ such that $B_\rho(x,t)\in \cac_i^\da[u_0-\mu]$, where $B_\rho(x,t)$ denotes the open ball of radius $\rho$ centered at $(x,t)$.

For $\al>0$, we define
$$
u_i^{\da,\al}(x,t)=\inf_{(y,s)\in B_\rho(x,t) } \{u_i^\da[u_0-\mu](y,s)+ (2\al)^{-1}(|x-y|^2+|t-s|^2)\} \ \ \text{ and } \ \  u_i^{\da,\al,\bt}=u_i^{\da,\al}\ast \chi_\bt,
$$
where $\chi_\bt$ is a standard smoothing mollifier.

Since $u_i^{\da,\al}$ is an inf-convolution of the continuous function $u_i^\da$ (see \cite{GB:94}), it is locally Lipschitz continuous and semi-concave with semi-concavity constant $1/{\al}$.

It follows that $u_i^{\da,\al,\bt}$ is a smooth semi-concave function with semi-concavity constant $1/{\al}$ and
$$\liminf_{\underset{\al,\,\bt\rightarrow 0}{(y,s)\rightarrow(\bar y,\bar s)}}\,u_i^{\da,\al,\bt}(y,s)=u_i^{\da}[u_0-\mu](\bar y,\bar s).$$

Finally, using Jensen's inequality and the concavity of the Hamiltonian, we obtain that, for some $K>0$, $u_i^{\da,\al,\bt}$ is a smooth and, hence, a classical supersolution to
\beq\label{eq:uidaalbt}
u_{i,t}^{\da,\al,\bt}-\e\Delta u_i^{\da,\al,\bt}-|D u_i^{\da,\al,\bt}|^2-R\ast\chi_\bt\geq -K\al-\e/{\al} \quad \text{in } \quad B_\rho(x,t).
\eeq

To prove \fer{supervida} we show that the smooth approximations $v_i^{\da,\al,\bt,\e}$ of $v_i^\da$ in $B_\rho(x,t)$ given by
\beq \label{vda}
n_\e+\exp\big( \f {u_i^{\da,\al,\bt}+2\da }{\e}\big)=\exp\big(\f {v_i^{\da,\al,\bt,\e}}{\e}\big)
\eeq
are almost supersolutions to \fer{supervida} for $\al$, $\bt$ and $\e$ small. Notice that in \fer{vda} we use $2\delta$ instead of $\delta$.

Replacing $n_\e$ by $\exp\big(\f {v_i^{\da, \al,\bt,\e}}{\e}\big)-\exp\big( \f {u_i^{\da,\al,\bt}+2\da }{\e}\big)$ in  \fer{eq:sroot} we get
\begin{align}\nonumber
R n_\e-\bt_\e\sqrt{n_\e}&=\big(v_{i,t}^{\da,\al,\bt,\e}-\e \Delta v_i^{\da,\al,\bt,\e}-|D v_i^{\da,\al,\bt,\e}|^2\big)\exp(\e^{-1}v_i^{\da,\al,\bt})\\
\nonumber&- \big(u_{i,t}^{\da,\al,\bt} - \e \Delta u_i^{\da,\al,\bt}-|D u_i^{\da,\al,\bt}|^2\big)\exp(\e^{-1} (u_i^{\da,\al,\bt}+2\da)),
\end{align}
and, in view of \fer{vda},
\begin{align}\nonumber
v_{i,t}^{\da,\al,\bt,\e}& -\e\Delta v_i^{\da,\al,\bt,\e}-|D v_i^{\da,\al,\bt,\e}|^2
\\ \nonumber &= (u_{i,t}^{\da,\al,\bt} - \e \Delta u_i^{\da,\al,\bt}-|D u_i^{\da,\al,\bt}|^2 -R\ast \chi_\bt)
\exp(\e^{-1}( u_i^{\da,\al,\bt}+2\da-v_i^{\da,\al,\bt,\e}))
\\
\nonumber& +(R\ast \chi_\bt-R)\exp(\e^{-1} (u_i^{\da,\al,\bt}+2\da-v_i^{\da,\al,\bt})) + R - \bt_\e n_\e^{1/2}\exp(-\e^{-1}v_i^{\da,\al,\bt,\e}).
\end{align}

Using that, in view of \fer{vda}, $\exp(\e^{-1}(u_i^{\da,\al,\bt}+2\da-v_i^{\da,\al,\bt,\e}))\leq 1$, and \fer{eq:uidaalbt} we find
\begin{align}\nonumber
v_{i,t}^{\da,\al,\bt,\e}- & \e\Delta v_i^{\da,\al,\bt,\e}-|D v_i^{\da,\al,\bt,\e}|^2 -R\geq  -K\al-\e/\al \\
\nonumber& +(R\ast \chi_\bt-R)\exp(\e^{-1} (u_i^{\da,\al,\bt}+2\da-v_i^{\da,\al,\bt})) - \bt_\e n_\e^{1/2}\exp(\e^{-1}v_i^{\da,\al,\bt}).
\end{align}

Define
$$
v_i^{\da,\al,\bt}(\bar y,\bar s) = \liminf_{\underset{(y,s)\rightarrow (\bar y,\bar s)}{\e\rightarrow 0}}\,v_i^{\da,\al,\bt,\e}(y,s).
$$

Letting  $\e\to0$ and using the stability of viscosity supersolutions we obtain
\begin{align}\label{lime}
v_{i,t}^{\da,\al,\bt}-|D v_i^{\da,\al,\bt}|^2 -R&\geq -K\al \\
\nonumber &+\liminf_{\underset{(y,s)\rightarrow (\bar y,\bar s)}{\e\rightarrow 0}}\,{[ (R\ast \chi_\bt-R)\exp(\e^{-1}(u_i^{\da,\al,\bt}+2\da-v_i^{\da,\al,\bt}))- \bt_\e n_\e^{1/2}\exp(\e^{-1}v_i^{\da,\al,\bt})]}.
\end{align}

Recalling that  $u_i^\da+2\da > u_m + \da $ in $\Omega_i^\da[u_0-\mu]$, we deduce that, as $\e,\al,\bt\to0$,
in $B_\rho(x,t)$,
\beq\label{ineq}
\bt_\e n_\e^{1/2}\exp(-\e^{-1}v_i^{\da,\e})=\bt_\e n_\e^{1/2}(n_\e+\exp(\e^{-1}u_i^{\da,\e}))^{-1} \leq
(1/2)\bt_\e\exp(-(2\e)^{-1}u_i^{\da,\al,\bt})\to 0.
\eeq

Moreover, as $\e,\bt\to0$, we also have
\beq \label{rchi}R\ast \chi_\bt-R \to 0, \ \ \  \exp(\e^{-1}( u_i^{\da,\al,\bt}+2\da-v_i^{\da,\al,\bt}))<1 \ \ \text{ and } v_i^\da(\bar \eta, \bar s)=\liminf_{\underset{(y,s)\rightarrow (\bar y,\bar s)}{\al,\bt\rightarrow 0}}\,v_i^{\da,\al,\bt}(y,s).
\eeq

Using \fer{lime}, \fer{ineq}, \fer{rchi},
and the stability of viscosity supersolutions we find
$$
v_{i,t}^{\da}-|D v_i^{\da}|^2 -R\geq 0 \quad \text{in } \quad {B}_\rho(x,t).
$$

Since all the above hold for
all $(x,t)\in \Omega_i^\da[u_0-\mu]$, it follows that the lower semicontinuous function $v_i^\da$ is a supersolution to
$$
v_{i,t}^\da -|D v_i^\da|^2-R \geq 0 \quad \text{in } \quad\Omega_i^\da[u_0-\mu], \ \
v_i^\da =u_0 \quad  \text{for } \quad \{u_0-\mu>u_m-\da\}\cap (\R^d\times \{0\}).
$$
\end{proof}

\section{Constant rate  }
\label{sec:Rcons}
Here we assume that the rate is a constant, i.e.,
\beq\label{constant}
R(x)=R \quad \text{in } \R^d,
\eeq
and, in addition, setting $O=\{x\in\R^d: \,u_0(x)> u_m\}$,  we have
\beq\label{bdr}
\overline{O} = \{x\in\R^d: \,u_0(x)\geq u_m\}.
\eeq

We have:
\begin{theorem}
\label{th:Rconst} Assume \fer{constant} and \fer{bdr}. Then
\beq\label{limuRcons}\lim_{\e \rightarrow 0}u_\e(x,t)=U[u_0](x,t) \quad \text{locally uniformly in  } \left(\R^d\times [0,\infty)\right)\setminus \left\{(x,t)\,|\,U[u_0](x,t)= u_m\right\},\eeq
with
\beq\label{Orconst}\Omega[u_0]=\{ (x,t)|\,\sup_{y\in \overline{O}}\,\{-\f {|x-y|^2}{4t}+Rt+ u_0(y)\}\geq u_m \},\eeq
and
\beq\label{Urconst}U[u_0]=\bc\sup_{y\in \overline{O}}\, \left\{ -\f{|x-y|^2}{4t}+Rt+u_0(y)\right\} \quad \text{ if }\quad (x,t)\in\Omega[u_0],\\[2mm]
-\infty \quad \text{otherwise.}\ec\eeq
\end{theorem}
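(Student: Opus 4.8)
The plan is to exploit the two one-sided bounds of Theorem \ref{3.1}, namely $\overline u\le U[u_0]$ everywhere and $U[u_0-\mu]+\mu\le\underline u$ on $\Omega[u_0-\mu]$ for every $\mu>0$, after first computing $U[u_0]$ and $\Omega[u_0]$ explicitly from the iteration \fer{step1}--\fer{omega}. Set $V(x,t)=\sup_{y\in\overline O}\{-\f{|x-y|^2}{4t}+Rt+u_0(y)\}$, a continuous function of $(x,t)$ for $t>0$ (it is a locally equi-Lipschitz supremum, since the quadratic term is coercive and $u_0$ is Lipschitz). The content of \fer{Orconst}--\fer{Urconst} is precisely that $\Omega[u_0]=\{V\ge u_m\}$ and that $U[u_0]=V$ there, $-\infty$ elsewhere.

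For the upper bound $U[u_0]\le V$: since $R$ is constant, Jensen's inequality gives $\int_0^t(-|\dot x|^2/4+R)\,ds\le-|x-x(0)|^2/(4t)+Rt$ for any $\mathrm C^1$ curve with $x(t)=x$, and in \fer{def:-da} with $i=1$ every admissible curve starts at a point of $\Omega_1^\da[u_0]\cap(\R^d\times\{0\})=\{u_0>u_m-\da\}\times\{0\}$; hence $U^\da[u_0]\le u_2^\da[u_0]\le V^\da:=\sup\{-\f{|x-y|^2}{4t}+Rt+u_0(y):u_0(y)>u_m-\da\}$. As $\da\downarrow0$ the admissible set decreases to $\{u_0\ge u_m\}$, which by assumption \fer{bdr} equals $\overline O$, and a routine compactness argument (near-optimizers of $V^\da$ stay in a fixed compact set and subconverge into $\overline O$) gives $V^\da\downarrow V$, so $U[u_0]\le V$. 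For the lower bound: fix $(x,t)$ and $y\in\overline O$ (so $u_0(y)\ge u_m$) with $h(y):=-\f{|x-y|^2}{4t}+Rt+u_0(y)>u_m-\da$; along the segment $x(s)=y+(s/t)(x-y)$ the running cost plus $u_0(y)$ equals an affine $g(s)$ with $g(0)=u_0(y)\ge u_m$ and $g(t)=h(y)>u_m-\da$, so $g>u_m-\da$ on $[0,t]$. An induction on $i$ shows this segment stays in $\Omega_i^\da[u_0]$ for every $i$: the base case uses $u^1(x(s),s)\ge g(s)$, and the step uses that once the segment lies in $\Omega_i^\da[u_0]$ it is admissible in \fer{def:-da} and yields $u_{i+1}^\da[u_0](x(s),s)\ge g(s)>u_m-\da$. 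Hence $(x,t)\in\Omega_i^\da[u_0]$ for all $i,\da$, so $(x,t)\in\Omega[u_0]$, and $u_i^\da[u_0](x,t)\ge h(y)$ for all $i$, giving $U[u_0](x,t)\ge\sup\{h(y):y\in\overline O,\ h(y)>u_m-\da\}$; on $\{V>u_m\}$ this supremum already equals $V(x,t)$ for each $\da$, and on $\{V=u_m\}$ it exceeds $u_m-\da$, so $U[u_0]\ge V$ on $\{V\ge u_m\}$ and $\{V\ge u_m\}\subset\Omega[u_0]$. Combining with the remark after Theorem \ref{3.1} (that $U[u_0]=-\infty$ off $\Omega[u_0]$ and $U[u_0]\ge u_m$ on $\Omega[u_0]$), on $\{V<u_m\}$ one necessarily has $U[u_0]=-\infty$ and $(x,t)\notin\Omega[u_0]$; this proves \fer{Orconst}--\fer{Urconst}.

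For the convergence \fer{limuRcons}: on the open set $\{V>u_m\}$ Theorem \ref{3.1} gives $\overline u\le U[u_0]=V$. For the matching lower bound apply the lower-bound step to the datum $u_0-\mu$ (with $O_\mu=\{u_0>u_m+\mu\}$): on $\{S_\mu\ge u_m+\mu\}$, where $S_\mu(x,t)=\sup_{y\in\overline{O_\mu}}\{-\f{|x-y|^2}{4t}+Rt+u_0(y)\}$, one has $(x,t)\in\Omega[u_0-\mu]$ and $U[u_0-\mu]\ge S_\mu-\mu$, hence $\underline u\ge U[u_0-\mu]+\mu\ge S_\mu$. Since $O$ is dense in $\overline O$, $S_\mu\uparrow V$ pointwise as $\mu\downarrow0$, so for $(x_0,t_0)\in\{V>u_m\}$ and $\mu$ small one has $S_\mu(x_0,t_0)>u_m+\mu$ and $\underline u(x_0,t_0)\ge S_\mu(x_0,t_0)\to V(x_0,t_0)$; thus $\underline u\ge V$ on $\{V>u_m\}$. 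Then $\overline u\le V\le\underline u\le\overline u$ there, so $u_\e\to V=U[u_0]$ locally uniformly on $\{V>u_m\}$, while on $\{V<u_m\}$, $\overline u\le U[u_0]=-\infty$, i.e. $u_\e\to-\infty$ locally uniformly; since $\{U[u_0]=u_m\}=\{V=u_m\}$, this is exactly \fer{limuRcons}.

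The step I expect to be the main obstacle is the bookkeeping for the explicit formulas: showing that for constant $R$ the whole iteration \fer{def:-da} collapses to a single Hopf--Lax formula with sources restricted to $\overline O$ — which on the lower side requires the straight segments to remain admissible at every level $i$, and on the upper side requires the squeeze $\{u_0>u_m-\da\}\downarrow\overline O$ as $\da\downarrow0$, precisely the point where hypothesis \fer{bdr} is indispensable and without which $\Omega[u_0]$ could be strictly larger than $\{V\ge u_m\}$.
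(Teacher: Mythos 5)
Your proposal is correct and takes essentially the same route as the paper: the constant rate reduces the iteration to a Hopf--Lax formula along straight segments (admissible because the value along them is affine/concave in $(x,t)$), and the convergence \fer{limuRcons} is then obtained from the two-sided bounds of Theorem \ref{3.1} together with the continuity fact $\sup_{O}=\sup_{\overline O}$ (your $S_\mu\uparrow V$). The only difference is organizational: the paper shows the iteration stabilizes exactly after two steps and writes $U^\da[u_0]$, $\Omega^\da[u_0]$ and $U[u_0-\mu]$ explicitly, whereas you sandwich $U[u_0]$ between the level-two Jensen upper bound and the all-levels straight-segment lower bound, which is the same key observation used as an estimate rather than an identity.
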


We notice that, if $R<0$, then one can obtain \fer{limuRcons} from \fer{Rneglim} and the dynamic programming principle. We also remark that, in particular, Theorem \ref{th:Rconst} shows that the limit of the family $(u_\e)_{\e>0}$ is not, in general, given by \fer{Rneglim}.
 We refer to Appendix \ref{ap.nuniq} for an explicit example.

\begin{proof}[Proof of Theorem \ref{th:Rconst}]
When the rate $R$ is constant, after one iteration of \fer{def:-da}, \fer{def:cac-da} and \fer{def:omega} we find both  the set $\Omega^\da[u_0]$ and the function $U^\da[u_0]$, since for all $i>1$, $j>2$ and $\da>0$, $\Omega_i^\da[u_0]=\cac_j^\da[u_0]=\Omega^\da[u_0]$ and $u_i^\da[u_0]=U^\da[u_0]$.

Indeed, every optimal trajectory in $\cac_2^\da$ is  a straight line connecting a point in $\Omega_2^\da$ to a point in $I^\da=\{x\in\R^d: \, u_0(x)>u_m-\da\}$ and, hence, it  is included in $\Omega_2^\da$. This follows from the observation that    $$\phi(x,t) =-\f{|x-c|^2}{4t}+Rt+u_0(c)$$ is concave in $(x,t)$ and, therefore, all the optimal trajectories of the points in $\Omega_2^\da$ are included in $\Omega_2^\da$. It follows that $\Omega_2^\da=\cac_3^\da$, $u_2^\da=u_3^\da$ and consequently $\Omega_2^\da=\Omega_3^\da$. By iteration we obtain, for all $i>2$, $\Omega_2^\da=\Omega_i^\da=\Omega^\da$ and $u_2^\da=u_i^\da=U^\da$.

Using  \fer{def:omega} and \fer{def:-da} we see that, for all $i\geq2$,
\beq\label{Odarconst}\Omega^\da[u_0]=\Omega_i^\da[u_0]=\{ (x,t): \,\sup_{y\in I^\da}\,
\{-\f {|x-y|^2}{4t}+Rt+ u_0(y)\}>u_m-\da \},\eeq
and
\beq \label{Udarconst} U^\da[u_0]=u_i^\da[u_0](x,t)=\bc \sup_{y\in I^\da}\, \left\{-\f{|x-y|^2}{4t}+Rt+u_0(y)\right\} \quad  \text{if } \quad (x,t)\in\Omega^\da[u_0],\\[2mm]
-\infty \quad  \text{otherwise.}\ec\eeq

It is easy to verify that \fer{-da} holds, since, for all $i>2$ and $\da>0$,
$$
u_{i,t}^{\da}[u_0]-|D u_{i}^{\da}[u_0]|^2-R=0 \quad \text{in }\quad \Omega_i^\da[u_0]=\cac_{i+1}^\da[u_0].
$$

Letting $\da\to0$ in \fer{Odarconst} and \fer{Udarconst} we obtain \fer{Orconst}--\fer{Urconst}. (See Figure \ref{fig: rconst}.)

\begin{figure}
 \begin{center}
  \input{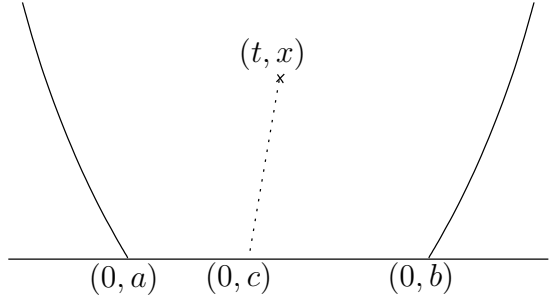}
  \\[-4mm]
   \caption{The case with $R(x)=R$ a positive constant, $\{x\in\R| \,u_0(x)> u_m\}=(a,b)$ and $u_0(\cdot)\geq u_m $ on an interval $[a,b]$. Then $\Omega=\cup_{d\in[a,b]}\{(x,t)\,|\,-\f {|x-d|^2}{4t}+Rt+ u_0(d)\geq u_m\}$, the optimal trajectories are straight lines and   $U(x,t)=-\f{|x-c|^2}
{4t}+Rt+u_0(c)$, where $c$ is a point where the maximum in \fer{Urconst} is attained.}
   \label{fig: rconst}
 \end{center}
\end{figure}

We also have
$$
\Omega[u_0-\mu]=\bigcap_{\da>0}\Omega^\da[u_0-\mu]=\{(x,t):\,\sup_{y\in J^\mu}\,\{-\f {|x-y|^2}{4t}+Rt+ u_0(y)\}\geq u_m +\mu\},$$
with
$$
J^\mu=\{(x,t):\,u_0\geq u_m+\mu\}.
$$

It follows that
\beq\label{O-rconst}
\cup_{\mu> 0}\Omega[u_0-\mu]=\{ (x,t):\,\sup_{y\in O}\,\{-\f {|x-y|^2}{4t}+Rt+ u_0(y)\}>u_m \},
\eeq
and
\beq\label{U-rconst}
\lim_{\mu\rightarrow 0^+}U[u_0-\mu]=\bc\sup_{y\in O}\, \{-\f{|x-y|^2}{4t}+Rt+u_0(y)\} \quad  \text{for }\quad (x,t)\in\cup_{\mu> 0}\Omega[u_0-\mu],
\\[4mm]
-\infty \quad \text{otherwise.}\ec
\eeq

We also notice that
\beq\label{supsup}
\sup_{y\in O}\, \{-\f {|x-y|^2}{4t}+Rt+ u_0(y)\}=\sup_{y\in \bar O}\,\{-\f {|x-y|^2}{4t}+Rt+ u_0(y)\}.
\eeq

Comparing \fer{Orconst}, \fer{Urconst} with \fer{O-rconst}, \fer{U-rconst} and using \fer{supsup} we deduce that
$$
\lim_{\mu\rightarrow 0}U[u_0-\mu](x,t)=U[u_0](x,t) \quad \text{for } \quad U[u_0](x,t)\neq u_m,
$$
and, consequently,  $$\lim_{\e \rightarrow 0}u_\e(x,t)=U[u_0](x,t) \quad \text{locally uniformly in } \left(\R^d\times [0,\infty)\right)\setminus \left\{(x,t)\,|\,U[u_0](x,t)= u_m\right\}.$$

\end{proof}

\section{Strictly positive rate}
\label{sec:rpositive}

In this section we study the limiting behavior of the family $(u_\e)_{\e>0}$ when
\begin{equation}\label{positive}
R \geq a>0 \quad \text{ in } \quad \R^d,
\end{equation}
and   show that, in general, the limit is not given by \fer{Rneglim}.

For this we need to assume that, for sufficiently small $\mu>\da>0$, there exists $\rho_{\da,\mu}>0$  such that  \quad
\begin{align} \label{A1}
\lim_{\mu\rightarrow 0}\lim_{\da\rightarrow 0}\rho_{\da,\mu}=0 \quad \text{ and, if  } \quad
u_0(y)>u_m-\da, \quad \text { then } \quad \sup_{|y-z|\leq \rho_{\da,\mu}} u_0(z)> u_m-\da+\mu.
\end{align}

Notice that it is important that $\rho_{\da,\mu}$ is chosen independently of $y$. If $u_0\in \mathrm{C}^1$, \fer{A1} implies $u_m$ is never a  local maximum of $u$.

We have
\begin{theorem}
\label{u-u+}
Assume \fer{positive} and \fer{A1}. Then
\beq\label{Ucont}
\lim_{\e \rightarrow 0}u_\e=U[u_0] \quad \text{locally uniformly in } \quad  \cup_{\mu>0}\, \Omega[u_0-\mu].
\eeq
\end{theorem}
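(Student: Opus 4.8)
The plan is to sandwich $u_\e$ between the quantities already controlled in Theorem~\ref{3.1}, namely $\overline u\le U[u_0]$ and $U[u_0-\mu]+\mu\le\underline u$ in $\Omega[u_0-\mu]$, and to show that the gap between $U[u_0]$ and $\lim_{\mu\to0}U[u_0-\mu]$ closes under hypothesis \fer{A1}. The upper bound $\overline u\le U[u_0]$ is free from the previous section, so the real content is the matching lower bound: I would fix $(x_0,t_0)\in\Omega[u_0-\mu_0]$ for some $\mu_0>0$ and show $\underline u(x_0,t_0)\ge U[u_0](x_0,t_0)$, which by Theorem~\ref{3.1} reduces to proving $\lim_{\mu\to0}U[u_0-\mu](x_0,t_0)\ge U[u_0](x_0,t_0)$, i.e. that shrinking the starting data from $u_0$ to $u_0-\mu$ (equivalently lowering the threshold from $u_m$ to below $u_m+\mu$) costs only $o(1)$ as $\mu\to0$.

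The key geometric step is a perturbation-of-trajectories argument exploiting $R\ge a>0$. Take a near-optimal $C^1$ trajectory $x(\cdot)$ for $U^\da[u_0](x_0,t_0)$ staying in $\Omega_i^\da[u_0]$, with endpoint $x(0)$ satisfying $u_0(x(0))>u_m-\da$. Using \fer{A1}, there is $z$ with $|x(0)-z|\le\rho_{\da,\mu}$ and $u_0(z)>u_m-\da+\mu$; I would reroute the trajectory near time $0$ so that it starts at $z$ instead of $x(0)$, spreading the spatial correction $\rho_{\da,\mu}$ over a short time interval $[0,\tau]$. The kinetic cost of this reroute is $O(\rho_{\da,\mu}^2/\tau)$, which by \fer{A1} tends to $0$ as $\da\to0$ then $\mu\to0$ provided $\tau$ is chosen appropriately (e.g. $\tau=\sqrt{\rho_{\da,\mu}}$). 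The point of $R\ge a>0$ is twofold: it guarantees (after this small reroute, and possibly an initial short climb) that the value at the new starting point exceeds $u_m-\da+\mu$ so the perturbed trajectory is admissible for the $[u_0-\mu]$-problem with threshold $u_m-\da$, i.e. lies in $\Omega_i^\da[u_0-\mu]$; and, since $u_\e\le u_\e^1$ would be the wrong direction, it is what makes the iteration nondegenerate. One then passes $i\to\infty$, $\da\to0$, $\mu\to0$ to conclude $\lim_{\mu\to0}U[u_0-\mu]\ge U[u_0]$ pointwise on $\cup_{\mu>0}\Omega[u_0-\mu]$, and the reverse inequality is automatic, giving equality; combined with the two half-relaxed-limit bounds this forces $\overline u=\underline u=U[u_0]$ there.

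Finally I would upgrade pointwise convergence to local uniform convergence on $\cup_{\mu>0}\Omega[u_0-\mu]$: since $\overline u$ is upper semicontinuous, $\underline u$ lower semicontinuous, and they coincide with the continuous function $U[u_0]$ (continuity of $U^\da[u_0]$ on $\cac_i^\da[u_0]$ comes from optimal control theory, \fer{step1}, and one checks $U[u_0]$ inherits continuity on the open set $\cup_\mu\Omega[u_0-\mu]$), the standard half-relaxed-limits argument (Dini-type) yields local uniform convergence on compact subsets of that open set.

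I expect the main obstacle to be the trajectory-surgery estimate under a \emph{time-varying, nonsmooth} admissible set $\Omega_i^\da[u_0]$: one must verify that the rerouted trajectory genuinely stays in $\Omega_i^\da[u_0-\mu]$ for all $s\in[0,\tau]$, not merely at the endpoint, and that the bookkeeping of the three limits $i\to\infty$, $\da\to0$, $\mu\to0$ (with $\rho_{\da,\mu}$, $\tau$, and the surgery scale all coupled) is consistent; the role of $R\ge a>0$ in absorbing the short-time losses and the uniformity of $\rho_{\da,\mu}$ in \fer{A1} are precisely what make this possible, but the details of keeping the perturbed path inside the obstacle region for the whole short interval are delicate.
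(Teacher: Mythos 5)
Your overall strategy (sandwich via Theorem \ref{3.1} plus a trajectory surgery near $t=0$ using \fer{A1} and $R\geq a>0$) is the right one, but as written the key step has a genuine gap. You keep the final time fixed and compress the spatial correction from $x(0)$ to $z$ into a short initial window $[0,\tau]$, and you identify the danger only as keeping the rerouted piece inside the admissible set on $[0,\tau]$. The real obstruction is the \emph{untouched} part of the trajectory on $[\tau,t]$: admissibility for the $[u_0]$-problem means staying in $\Omega_i^{\da}[u_0]$, while admissibility for the $[u_0-\mu]$-problem requires staying in $\Omega_i^{\da}[u_0-\mu]$, and at equal times one only has $\Omega_i^{\da}[u_0-\mu]\subset\Omega_i^{\da}[u_0]$ with, in general, strict inclusion (since $u_i^{\da}[u_0-\mu]\leq u_i^{\da}[u_0]-\mu$). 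So the unmodified middle and terminal portions of your path may simply exit $\Omega_i^{\da}[u_0-\mu]$, and the gain $ah$ from $R\geq a$ cannot help there unless extra time is actually inserted along the whole path. Relatedly, your reduction targets the same-time inequality $\lim_{\mu\to0}U[u_0-\mu](x_0,t_0)\geq U[u_0](x_0,t_0)$, which is precisely the delicate point: the paper warns (after Theorem \ref{3.1}) that $U[u_0-\mu]\to U[u_0]$ can fail, and proving it directly at fixed time is essentially the whole difficulty, not a consequence of a local surgery near $t=0$.

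The paper circumvents this by proving a \emph{time-delayed} comparison, \fer{delay}: for $h>\bar h=\f{\mu}{2a}+\f12\sqrt{\f{\mu^2}{a^2}+\f{\rho_{\da,\mu}^2}{a}}$ one has $u_i^{\da}[u_0](x,t)\leq u_i^{\da}[u_0-\mu](x,t+h)$, established by induction on $i$. The induction hypothesis delivers exactly the missing ingredient, the shifted inclusion $\Omega_i^{\da}[u_0]+he_t\subset\Omega_i^{\da}[u_0-\mu]$, which makes the time-translated original trajectory admissible on all of $[h,t+h]$; the new initial straight segment from $z$ to $y$ is run over the \emph{added} time $h$, and the choice of $\bar h$ gives $-|y-z|^2/(4h)+ah\geq\mu$, so this segment both costs nothing and stays in $\Omega^{\da}[u_0-\mu]$. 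The delay is then removed without ever invoking time-continuity of $\lim_{\mu}U[u_0-\mu]$: one bounds $U[u_0](x,t)\leq \lim_{\mu\to0^+}U[u_0-\mu](x,t+h)\leq\underline u(x,t+h)$ via \fer{eq:limsupu} and lets $h\to0^+$ using $\liminf_{h\to0^+}\underline u(x,t+h)=\underline u(x,t)$, which holds by the definition of the half-relaxed limits. To repair your argument you would need to either adopt this delayed formulation or prove an analogous quantitative shifted inclusion of the constraint sets; your final step upgrading $\underline u=\overline u=U[u_0]$ to local uniform convergence is fine.
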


Recall that, in view of Theorem \ref{3.1}, we already know that $\lim_{\e \rightarrow 0}u_\e=-\infty$ in $\Omega[u_0]^c$. 

\begin{proof}[Proof of Theorem \ref{u-u+}]
For $h>\bar h=\f {\mu}{2a}+\f 1 2\sqrt{\f{\mu^2}{a^2}+\f {\rho_{\da,\mu}^2}{a}}$, $(x,t)\in \R^d \times [0,\infty),\; i\geq 1 $ and $\mu,\,\da>0,$ we have
\beq
\label{delay}
u_i^\da[u_0](x,t)\leq u_i^\da[u_0-\mu](x,t+h).
\eeq

We postpone the proof of this inequality to Appendix \ref{app: dalay} and we continue with the ongoing one.

Letting $i\to+\infty$ and $\da, \mu \to 0$ we find, for all $h>0$ and $t>0$,
\beq\label{delay1}
U[u_0](\cdot,\cdot)\leq \lim_{\mu\rightarrow 0^+}U[u_0-\mu](\cdot,\cdot + h).
\eeq

Hence, for all $(x,t)\in \cup_{\mu>0}\Omega[u_0-\mu]$,
$$
U[u_0](x,t)\leq \lim_{\mu\rightarrow 0^+}U[u_0-\mu](x,t+h)\leq \underline u(x,t+h)\leq \overline u(x,t+h) , 
$$
$$
U[u_0](x,t)\leq \liminf_{h\rightarrow 0^+}\underline u(x,t+h)\leq \limsup_{h\rightarrow 0^+}\,\overline u(x,t+h).
$$

The definitions of $\underline u$ and $\overline u$ also imply that
$$
\liminf_{h\rightarrow 0^+}\underline u(x,t+h)=\underline u(x,t) \quad \text{ and } \quad \limsup_{h\rightarrow 0^+}\,\overline u(x,t+h)=\overline u(x,t).
$$

Combining all the above we obtain
$$
U[u_0]\leq \underline u \leq \overline u \quad \text{in } \quad \cup_{\mu>0}\Omega[u_0-\mu].
$$

This last inequality and \fer{eq:limsupu} yield
$
\underline u =\overline u=U[u_0] \quad \text{in } \quad \cup_{\mu>0}\Omega[u_0-\mu],
$
and, hence,
$$
\lim_{\e\rightarrow 0}u_\e =U[u_0] \quad \text{in } \quad \cup_{\mu>0}\Omega[u_0-\mu].
$$
\end{proof}

\section{Conclusions}
\label{sec:conclusion}

We showed that the local uniform limit, as $\e \to 0$, for the parabolic problem \fer{eq:sroot} with finite time extinction is naturally analyzed using the Hopf-Cole change of variables \fer{defue}. The formal limit is the variant \fer{eq:hjs} of the standard eikonal equation.  The new feature is the resulting  quasi-variational inequality with an  obstacle that depends on the solution itself.

The quasi-variational  inequality admits many solutions (see Appendix \ref{ap.nuniq}) and the difficulty is to select the correct additional information. This is easy when the rate  $R$ is  negative, as shown in Section \ref{sec:Rneg}. Indeed, in this case it is enough to enforce the Dirichlet boundary condition on the boundary of the unknown open set $\Omega$ where the liminf of the family $(u_\e)_{\e>0}$ is finite. This is due to the fact that,  for concave Hamiltonians,  the supremum of two supersolutions is still a supersolution.

When the rate $R$ is  positive we do not have easy supersolutions at hand, and the answer is more elaborate. It requires an induction argument which allows us to identify again the limit of the family $(u_\e)_{\e>0}$. The  key ingredient
is a free boundary problem defined through the level set of the solution. The boundary condition for the resulting equation involves state constraints which leads us to study the problem using the related control problem.

If the growth/death rate $R$ changes sign, we can only bound from above and below the half-relaxed limits of the family $(u_\e)_{\e>0}$ by sub and supersolutions $\bar u$  and $\underline u$ respectively. 

In terms of the biological motivation, our results qualitatively mean that the specific form of the survival threshold (a square root here) is irrelevant for the asymptotic problem.
It also shows that  the solution is deeply influenced by the survival threshold except when $R$ is nonpositive. This confirms earlier numerical simulations in \cite{MG.BP:09}.

We conjecture that these upper and lower solutions are in fact equal and the correct setting (implying uniqueness) is to find a pair $(u, \Omega)$ for which we can impose both Dirichlet and state constraints boundary conditions. Both establishing directly these boundary conditions for the half-limits of the family $(u_\e)_{\e>0}$  as well as developing a theory of state constraints boundary conditions for time varying, non-smooth domains are challenging mathematical issues.

\appendix

\section{Non-uniqueness}
\label{ap.nuniq}

To explain the difficulty associated with  \fer{eq:hjs}, we present here  counter-examples for uniqueness and elaborate further conditions. Recall that  the problem is to find pairs $(u,\Omega)$ such that $u$ is a viscosity solutions  to \fer{eq:hjs}.

A first source for non-uniqueness is the value of $u$ on $\p \Omega$. Indeed assume that $R$ and $u_0$ are such that there exists a unique  viscosity solution $u^1$ of \eqref{eq:u1} or, more generally, with $u^1$ defined in \fer{eq:simple} and \fer{defue1}. For all $\eta\geq u_m$, we introduce the pair $(w_\eta, \Omega_\eta)$ given by
$$
\Omega_\eta=\{(x,t):\; u^1(x,t)\geq\eta\} \quad \text{ and} \quad
w_\eta(x,t)=\bc
u^1(x,t) \quad \text{ if } \quad (x,t)\in \Omega_\eta,\\[2mm]
-\infty \quad \text{otherwise.}
\ec
$$

It can be easily verified that $(w_\eta, \Omega_\eta)$ is a viscosity solution of \eqref{eq:hjs}. In order to avoid this artefact, one can add the Dirchlet boundary condition \fer{limbor} which appeared throughout our constructions. However in the next example we see that this Dirichlet condition is not enough to obtain uniqueness. In fact a state constraint boundary condition is hidden behind the property $u^1= -\infty$ in the complement of $\overline\Omega_\eta$ and we do not take it into account here.
\\

Let
$$
R(x)=1 \quad \text{ and } \quad u_0(x)=-x^2.
$$

A simple computation shows that the solution $u^1$ to \eqref{eq:u1} is given by
$$
u^1(x,t)=t-\f{x^2}{1+4t}.
$$

Therefore the first truncation of $u^1$, given by 
$$
\widetilde u(x,t)=\bc
t-\f{x^2}{1+4t} \quad \text{ for } \quad t-\f{x^2}{1+4t}\geq u_m,\\[4mm]
-\infty \quad  \text{otherwise},
\ec
$$
with
$$
\widetilde\Omega=\{(x,t):\widetilde u(x,t)>-\infty\},
$$
is a viscosity solution of \fer{eq:hjs}.
As a matter of fact this is the maximal subsolution to \fer{eq:hjs}, \fer{limbor} but it does not satisfy the state constraint boundary condition. To see this choose $u_m=-0.04$. The point $(1,2)$ is included in $\widetilde\Omega$ since $\widetilde u(1,2)=0.2>{-0.04}$. The optimal trajectory associated to this point, giving the value $\widetilde u(1,2)=0.2$, is the straight line connecting $(0,0.4)$ to $(1,2)$. But  $u_0(0.4)=-0.16< -0.04$. So the point $(0,0.4)$ is not included in $\widetilde\Omega$. Therefore a part of the optimal trajectory of the point $(1,2)$ is not included in $\widetilde\Omega$. Hence $\widetilde u$ does not satisfy the state constraint condition.\\

Following the arguments in Section \ref{sec:Rcons} we  can find a viscosity solution  to \fer{eq:hjs} and \fer{limbor}.
Indeed using \fer{Urconst} it is possible to  compute explicitly the function $U[u_0]=\lim_{\da\rightarrow 0}U^\da[u_0]=\lim_{\da\rightarrow 0}u_2^\da[u_0]$ to find
$$
\breve u(x,t)=\bc
t-\f{x^2}{1+4t} \quad \text{if } \quad -\f {x^2}{(1+4t)^2}\geq u_m,\, t-\f{x^2}{1+4t}\geq u_m,\\[2mm]
t-\f{(x-\sqrt{-u_m})^2}{4t}+u_m \quad \text{ if  } \quad x>0,\,-\f {x^2}{(1+4t)^2}\leq u_m,\, t \geq \f{(x-\sqrt{-u_m})^2}{4t} ,\\[2mm]
t-\f{(x+\sqrt{-u_m})^2}{4t}+u_m \quad \text{ if } \quad x<0,\,-\f {x^2}{(1+4t)^2}\leq u_m,\, t \geq \f{(x+\sqrt{-u_m})^2}{4t},\\[2mm]
-\infty \quad  \text{otherwise,}
\ec
$$
with
$$
\breve\Omega=\{(x,t): \breve u(x,t)>-\infty\}.
$$

From Theorem \ref{th:Rconst} we know that $\breve u$ is indeed the pointwise limit of the family $(u_\e)_{\e>0}$  outside the exceptional set $\{(x,t): \breve u(x,t)=u_m\}$.
\\

However, in general $\widetilde u\neq \breve u$.  Consider, for instance, the value $u_m=-0.04$. Then
$$
\widetilde u(2,1)=0.2,\quad \breve u(2,1)=0.15,
\quad  \widetilde u(2.21,1)=0.02,\quad \breve u(2.21,1)=-\infty.
$$
and, consequently,
$
\breve \Omega \varsubsetneq \widetilde \Omega.
$

On the other hand, according to Section \ref{sec:Rcons},  the state constraint boundary condition is satisfied for $\breve u$, which motivates our conjecture in Section \ref{sec:conclusion}.

\section{$u_i^{\da}[u_0]$ is a minimal solution of \fer{-da} in $\cac_{i}^{\da}[u_0]$}
\label{ap:control}

Here we prove that $u_i^{\da}[u_0]$ is a minimal solution of \fer{-da} in $\cac_{i}^{\da}[u_0]$ by
considering a  supersolution $w\in \cac_{i}^{\da}[u_0]$  of \fer{-da} and showing that

\beq
\label{minimal}u_i^{\da}[u_0]\leq w  \quad \text{in }
\quad \cac_{i}^{\da}[u_0].
\eeq

To this end, we fix $(x,t)\in \cac_i^{\da}[u_0]$ and
assume that $\left(\gamma(\cdot),\cdot\right):[0,t]\rightarrow \Omega_{i-1}^{\da}[u_0]$
is a $\mathrm{C}^1$-trajectory with $\left(\gamma(t),t\right)= (x,t)$.
Since $\cac_i^{\da}[u_0]$ is the set of points that can be connected by a $\mathrm{C}^1$-trajectory in $\Omega_{i-1}^{\da}[u_0]$ to some point in $\R^d\times\{0\}$, it follows that $\gamma$ is included in $\cac_i^{\da}[u_0]$.

For the supersolution $w$, we define, for $s\in [0,t]$, the (clearly) lower semicontinuous function
$\vp(s) = w(\gamma(s),s)$ and we observe 
that $\vp$ is a viscosity supersolution of
\beq
\label{vpsup}\vp'\geq -\f{|\dot{\gamma}|^2}{4}+R(\gamma) \qquad \text{in } \quad (0,t).
\eeq

We postpone the proof of this claim to the end of the present paragraph and we proceed noticing that the function
$$\psi(t)=\int_0^t \big(-\f{|\dot{\gamma}(s)|^2}{4}+R(\gamma(s))\big) ds+w(\gamma(0),0),$$
is a subsolution of \fer{vpsup}. Then using the standard comparison principle of viscosity solutions we obtain
$$w(x,t)=\vp(t)\geq  \int_0^{t} \big(-\f{|\dot{\gamma}(s)|^2}{4}+R(\gamma(s))\big)ds+u_0(\gamma(0)),$$
and, since this is true for any $\mathrm{C}^1$-trajectory $\gamma$ and any $(x,t)\in \cac_i^{\da}[u_0]$,  \fer{minimal} follows.

It remains to prove \fer{vpsup}. Let $\phi\in \mathrm{C}^1((0,t))$ be a test function, assume that $\bar t$ is a strict minimum point of $\vp-\phi$ and  consider the function
$$F_\mu(y,t)=w(y,t)-\phi(t)+\f{|y-\gamma(t)|^2}{\mu^2}+(t-\bar t)^2,$$
which attains a local minimum at a point $(y_\mu,t_\mu)$ such that, as $\mu\to0$,
\beq \label{limmu}
t_\mu-\bar t\rightarrow 0  \quad \text{ and } \quad \f{|y_\mu-\gamma(t_\mu)|^2}{\mu^2}\rightarrow 0.
\eeq

Since $w$  is a supersolution to  \fer{-da}, we have
$$\phi'(t_\mu)+\f {2\big(\gamma(t_\mu)-y_\mu\big)}{\mu^2}\cdot \dot{\gamma}(t_\mu)+2(t_\mu-\bar t)\geq \big|\f{2(y_\mu-\gamma(t_\mu))}{\mu^2}\big|^2+R(y_\mu).
$$

It is immediate that
$$
\phi'(t_\mu)+2(t_\mu-\bar t)\geq -\f{|\dot{\gamma}(t_\mu)|^2}{4}+R(y_\mu),
$$
and, after letting $\mu\to0$, we conclude using \fer{limmu}.


\section{The proof of \fer{delay}}
\label{app: dalay}

We prove by induction on $i$ that, for all $h>\bar h=\f {\mu}{2a}+\f 1 2\sqrt{\f{\mu^2}{a^2}+\f {\rho_{\da,\mu}^2}{a}}$, $i>1, \, \da>0,$ and  $(x,t)\in \R^d\times[0,\infty)$,
$$u_i^\da[u_0](x,t)\leq u_i^\da[u_0-\mu](x,t+h).$$

Recall that $u_1^\da[u_0]=u^1[u_0]$ and $u_1^\da[u_0-\mu]=u^1[u_0-\mu]=u^1[u_0]-\mu$, where $u^1[u_0]$ is the solution of {\fer{eq:u1}}.
Moreover \fer{positive} yields
$$ u^1[u_0](\cdot,t)+ah-\mu\leq u^1[u_0](\cdot,t+h)-\mu=u^1[u_0-\mu](\cdot,t+h).$$

Therefore, for all $h>\bar h\geq \mu/ a$, we have
$$u^1[u_0](\cdot,t)\leq u^1[u_0-\mu](\cdot,t+h),$$
and, consequently,
$$u_1^\da[u_0](\cdot,t)\leq u_1^\da[u_0-\mu](\cdot,t+h).$$

If, for all $h>\bar h$ and $t>0$,
$$u_i^\da[u_0](\cdot,t)\leq u_i^\da[u_0-\mu](\cdot,t+h),$$
it follows that, for all $h>\bar h$,
\beq\label{omegahet}\Omega_i^\da[u_0]+he_t\subset \Omega_i^\da[u_0-\mu],\eeq
where $e_t$ is the unit vector in the direction of time axis.

Fix $(x,t)\in \cac_{i+1}^\da[u_0]\subset\Omega_i^\da[u_0]$ and let $\gamma$ be  a $\mathrm{C}^1$-trajectory in $\Omega_i^\da[u_0]$ connecting $(x,t)$ to a point $(y,0)$ with $u_0(y)>u_m -\da$. It follows from \fer{A1}  that there exists $z\in\R^d$ such that $|z-y|<\rho_{\da,\mu}$ and $u_0(z)>u_m-\da+\mu$. Without loss of generality we can take $u_0(z)\geq u_0(y)$.

The claim is that the trajectory $\widetilde \gamma:[0,t+h]\rightarrow \R^d$ defined by
\beq\label{def:gamma'}\widetilde\gamma(s)=\bc
h^{-1}s(y-z)+z \quad \text{ if } \quad 0\leq s\leq h,\\[2mm]
\gamma(s-h)\quad \text{for } \quad h< s\leq t+h,
\ec\eeq
is included in $\Omega_i^\da[u_0-\mu]$. Indeed  notice that the choice of $\bar h$ yields, for all $h>\bar h$,
$$-\f{|y-z|^2}{4h}+ah\geq \mu\geq0.$$

Consequently, it follows from  \fer{positive} and  the choice of $z$ that the straight line connecting $(y,h)$ to $(z,0)$ is included in $\Omega^\da[u_0-\mu]=\cap_j\Omega_j^\da[u_0-\mu]$, and, in particular, in $\Omega_i^\da[u_0-\mu]$. Therefore, for all $0\leq s\leq h$, the point $(\widetilde\gamma(s),s)$ is included in $\Omega_i^\da[u_0-\mu]$.

Moreover using \fer{omegahet} we find that $\left(\gamma(s),s+h\right)\in\Omega_i^\da[u_0-\mu]$ for all $s\geq  0$. Hence,  for all $h<s$, $(\widetilde\gamma(s),s)\in\Omega_i^\da[u_0-\mu]$, and we conclude that $\widetilde\gamma$ is included in $\Omega_i^\da[u_0-\mu]$.
\\

Next write
\begin{align}\label{gamma'}
\int_0^{t+h}(-\f{|\dot{\widetilde\gamma}(s)|^2}{4}+R(\widetilde\gamma(s)))ds+u_0(z)-\mu&=\int_0^{t}(-\f{|\dot{\gamma}(s)|^2}{4}+R(\gamma(s)))ds\\
\nonumber&+\int_0^{h}(-\f{|\dot{\widetilde\gamma}(s)|^2}{4}+R(\widetilde\gamma(s)))ds+u_0(z)-\mu.
\end{align}

It follows  that
\beq
\label{diff}
\int_0^{h}(-\f{|\dot{\widetilde\gamma}(s)|^2}{4}+R(\widetilde\gamma(s)))ds+u_0(z)-\mu\geq u_0(y).
\eeq

If this is true, then using \fer{def:-da}, \fer{gamma'} and \fer{diff} we obtain,  for all $h>\bar h$ and
$t>0$,
$$
u_{i+1}^\da[u_0](\cdot,t)\leq u_{i+1}^\da[u_0-\mu](\cdot,t+h),
$$
and we deduce \fer{delay}.
\\

It remains to prove \fer{diff}. Since $R\geq a$, in view of \fer{def:gamma'}, we have
$$
u_0(z)\geq u_0(y),  \ \  \int_0^{h}(-\f{|\dot{\widetilde\gamma}(s)|^2}{4}+R(\widetilde\gamma(s)))ds+u_0(z)-\mu \geq -\f{|y-z|^2}{4h}+ah+u_0(z)-\mu,
$$
and, for all $h>\bar h,$
$$
-\f{|y-z|^2}{4h}+ah\geq \mu.
$$

%
%

\begin{thebibliography}{10}

\bibitem{MB.ICD:96}
M.~Bardi and I.~Capuzzo-Dolcetta.
\newblock {\em Optimal control and viscosity solutions of
 {H}amilton-{J}acobi-{B}ellman equations}.
\newblock Systems \& Control: Foundations \& Applications. Birkh\"auser Boston
 Inc., Boston, MA, 1997.
\newblock With appendices by Maurizio Falcone and Pierpaolo Soravia.

\bibitem{GB:94}
G.~Barles.
\newblock {\em Solutions de viscosit\'e des \'equations de
 {H}amilton-{J}acobi}, volume~17 of {\em Math\'ematiques \& Applications
 (Berlin) [Mathematics \& Applications]}.
\newblock Springer-Verlag, Paris, 1994.
\bibitem{GB:BJ}
G.~Barles.
\newblock Discontinuous viscosity solutions of first order
Hamilton-Jacobi Equations: A guided visit.
\newblock {\em Nonlinear Analysis TMA}, Vol
20, N${}^\circ$ 9, 1993, p 1123-1134.

\bibitem{GB.LE.PS:90}
G.~Barles, L.~C. Evans, and P.~E. Souganidis.
\newblock Wavefront propagation for reaction-diffusion systems of {PDE}.
\newblock {\em Duke Math. J.}, 61(3):835--858, 1990.

\bibitem{GB.SM.BP:09}
G.~Barles, S.~Mirrahimi, and B.~Perthame.
\newblock Concentration in {L}otka-{V}olterra parabolic or integral equations:
 a general convergence result.
\newblock {\em Methods Appl. Anal.}, 16(3):321--340, 2009.

\bibitem{GB.BP:07}
G.~Barles and B.~Perthame.
\newblock Concentrations and constrained {H}amilton-{J}acobi equations arising
 in adaptive dynamics.
\newblock {\em Contemp. Math.}, 439:57--68, 2007.


\bibitem{EB.RJ:90}
E. N. Barron  and R. Jensen.
\newblock Semicontinuous viscosity solutions for {H}amilton-{J}acobi equations
	with convex {H}amiltonians.
\newblock  {\em Comm. Partial Differential Equations}, 15(12):1713--1742, 1990.



\bibitem{PC:09}
P.~Cardaliaguet.
\newblock A note on the regularity of solutions of {H}amilton-{J}acobi
 equations with superlinear growth in the gradient variable.
\newblock {\em ESAIM: COCV}, 15(2):367--376, 2009.

\bibitem{C.I.L:92}
M.~G. Crandall, H.~Ishii, and P.-L. Lions.
\newblock User's guide to viscosity solutions of second order partial
 differential equations.
\newblock {\em Bull. Amer. Math. Soc. (N.S.)}, 27(1):1--67, 1992.

\bibitem{C.L:83}
M.~G. Crandall and P.-L. Lions.
\newblock Viscosity solutions of {H}amilton-{J}acobi equations.
\newblock {\em Trans. Amer. Math. Soc.}, (1):1--42, 1983.

\bibitem{LE.BK:79}
L.~C. Evans and B.~K. Knerr.
\newblock Instantaneous shrinking of the support of nonnegative solutions to
 certain nonlinear parabolic equations and variational inequalities.
\newblock {\em Illinois J. Math.}, 23(1):153--166, 1979.

\bibitem{LE.PS:89}
L.~C. Evans and P.~E. Souganidis.
\newblock A {PDE} approach to geometric optics for certain semilinear parabolic
 equations.
\newblock {\em Indiana Univ. Math. J.}, 38(1):141--172, 1989.

\bibitem{WF.HS:93}
W.~H. Fleming and H.~M. Soner.
\newblock {\em Controlled {M}arkov processes and viscosity solutions},
 volume~25 of {\em Stochastic Modelling and Applied Probability}.
\newblock Springer, New York, second edition, 2006.

\bibitem{MG.BP:09}
M.~Gauduchon and B.~Perthame.
\newblock Survival thresholds and mortality rates in adaptive dynamics:
 conciliating deterministic and stochastic simulations.
\newblock {\em Mathematical Medicine and Biology}, 27(3): 195--210, 2010.

\bibitem{Lionsbook:82}
P.~L. Lions.
\newblock {\em Generalized solutions of {H}amilton-{J}acobi equations},
 volume~69 of {\em Research Notes in Mathematics}.
\newblock Pitman Advanced Publishing Program, Boston, 1982.

\bibitem{GB.BP:08}
B.~Perthame and G.~Barles.
\newblock Dirac concentrations in {L}otka-{V}olterra parabolic {PDE}s.
\newblock {\em Indiana Univ. Math. J.}, 57(7):3275--3301, 2008.

\bibitem{PQ.PS:07}
P.~Quittner and P.~Souplet.
\newblock {\em Superlinear parabolic problems. {B}low-up, global existence and
 steady state}.
\newblock Birkh\"auser Advanced texts, 2007.

\bibitem{HS:86}
H.~M. Soner.
\newblock Optimal control with state-space constraint.
\newblock {\em SIAM J. Control Optim.}, 24(3):552--561, 1986.

\bibitem{PS:98}
P.~E. Souganidis.
\newblock Front propagation: theory and applications.
\newblock In {\em Viscosity solutions and applications (Montecatini Terme,
 1995)}, volume 1660 of {\em Lecture Notes in Math.}, pages 186--242.
 Springer, Berlin, 1997.

\end{thebibliography}

\end{document}